\newtheorem{theorem}{Theorem}[section]
\newtheorem{lemma}[theorem]{Lemma}
\newtheorem{corollary}[theorem]{Corollary}
\newtheorem{definition}[theorem]{Definition}
\theoremstyle{definition}
\newtheorem{remark}[theorem]{Remark}
\numberwithin{equation}{section}
\DeclareMathOperator{\RE}{Re} 
\begin{document}
\title[A class of harmonic functions defined by convolution]{A comprehensive class of harmonic functions defined by convolution and its connection with integral transforms and hypergeometric functions}
\author{Sumit Nagpal}
\address{Department of Mathematics,\\
 University of Delhi,\\
 Delhi--110 007,\\
 India}
\email{sumitnagpal.du@gmail.com }
\author{V. Ravichandran}
\address{Department of Mathematics,\\
 University of Delhi,\\
 Delhi--110 007,\\
 India}
\email{vravi68@gmail.com}
\subjclass{Primary: 30C45, Secondary: 31A05, 33C05}
\keywords{harmonic mappings; convolution; hypergeometric functions; integral transform; convex and starlike functions.}
\begin{abstract}
For given two harmonic functions $\Phi$ and $\Psi$ with real coefficients in the open unit disk $\mathbb{D}$, we study a class of harmonic functions $f(z)=z-\sum_{n=2}^{\infty}A_nz^{n}+\sum_{n=1}^{\infty}B_n\bar{z}^n$ $(A_n,  B_n \geq 0)$ satisfying
\[\RE \frac{(f*\Phi)(z)}{(f*\Psi)(z)}>\alpha \quad (0\leq \alpha <1, z \in \mathbb{D});\]
$*$ being the harmonic convolution. Coefficient inequalities, growth and covering theorems, as well
as closure theorems are determined. The results obtained extend
several known results as special cases. In addition, we study the class of harmonic functions $f$ that satisfy $\RE f(z)/z>\alpha$ $(0\leq \alpha <1, z \in \mathbb{D})$. As an application, their connection with certain integral transforms and hypergeometric functions is established.
\end{abstract}
\maketitle

\section{Introduction}
Let $\mathcal{H}$ denote the class of all complex-valued harmonic functions $f$ in the open unit disk $\mathbb{D}=\{z \in \mathbb{C}:|z|<1\}$ normalized by $f(0)=0=f_{z}(0)-1$. Such functions can be written in the form $f=h+\bar{g}$, where
\begin{equation}\label{eq1.1}
h(z)=z+\sum_{n=2}^{\infty}A_nz^{n}\quad\mbox{and}\quad g(z)=\sum_{n=1}^{\infty}B_nz^n
\end{equation}
are analytic in $\mathbb{D}$. We call $h$ the analytic part and $g$ the co-analytic part of $f$. Let $\mathcal{S}_{H}$ be the subclass of $\mathcal{H}$ consisting of univalent and sense-preserving functions. In 1984, Clunie and Sheil-Small \cite{cluniesheilsmall} initiated the study of the class $\mathcal{S}_{H}$ and its subclasses.

For analytic functions $\phi(z)=z+\sum_{n=2}^{\infty}A_nz^{n}$ and $\psi(z)=z+\sum_{n=2}^{\infty}A'_nz^{n}$, their convolution (or Hadamard product) is defined as $(\phi*\psi)(z)=z+\sum_{n=2}^{\infty}A_nA'_nz^{n}$, $z\in \mathbb{D}$. In the harmonic case, with $f=h+\bar{g}$ and $F=H+\bar{G}$, their harmonic convolution is defined as $f*F=h*H+\overline{g*G}$. Harmonic convolutions are investigated in \cite{dorff1,dorff2,goodloe,sumit1,sumit2}.

Let $\mathcal{TH}$ be the subclass of $\mathcal{H}$ consisting of functions $f=h+\bar{g}$ so that $h$ and $g$ take the form
\begin{equation}\label{eq1.2}
h(z)=z-\sum_{n=2}^{\infty}A_nz^{n}\quad,\quad g(z)=\sum_{n=1}^{\infty}B_nz^n\quad (A_n,  B_n \geq 0).
\end{equation}
Making use of the convolution structure for harmonic mappings, we study a new subclass $\mathcal{TH}(\Phi_{i},\Psi_{j};\alpha)$ introduced in the following:

\begin{definition}
Suppose that $i,j\in \{0,1\}$. Let the functions $\Phi_{i},\Psi_{j}$ given by
\[\Phi_{i}(z)=z+\sum_{n=2}^{\infty}p_{n}z^{n}+(-1)^{i}\sum_{n=1}^{\infty}q_{n}\bar{z}^n\]
and
\[\Psi_{j}(z)=z+\sum_{n=2}^{\infty}u_{n}z^{n}+(-1)^{j}\sum_{n=1}^{\infty}v_{n}\bar{z}^n\]
are harmonic in $\mathbb{D}$ with $p_n > u_n\geq 0$ $(n=2,3,\ldots)$ and $q_n > v_n\geq 0$ $(n=1,2,\ldots)$. Then a function $f \in \mathcal{H}$ is said to be in the class $\mathcal{H}(\Phi_{i},\Psi_{j};\alpha)$ if and only if
\begin{equation}\label{eq1.3}
\RE \frac{(f*\Phi_i)(z)}{(f*\Psi_j)(z)}>\alpha \quad (z \in \mathbb{D}),
\end{equation}
where $*$ denotes the harmonic convolution as defined above and $0\leq \alpha <1$. Further we define the class $\mathcal{TH}(\Phi_{i},\Psi_{j};\alpha)$ by
\[\mathcal{TH}(\Phi_{i},\Psi_{j};\alpha)=\mathcal{H}(\Phi_{i},\Psi_{j};\alpha)\cap \mathcal{TH}.\]
\end{definition}

The family $\mathcal{TH}(\Phi_{i},\Psi_{j};\alpha)$ includes a variety of well-known subclasses of harmonic functions as well as many new ones. For example
\[\mathcal{TH}\left(\frac{z}{(1-z)^2}-\frac{\bar{z}}{(1-\bar{z})^2},\frac{z}{1-z}+\frac{\bar{z}}{1-\bar{z}};\alpha\right)\equiv \mathcal{TS}_{H}^{*}(\alpha);\]
and
\[\mathcal{TH}\left(\frac{z+z^2}{(1-z)^3}+\frac{\bar{z}+\bar{z}^2}{(1-\bar{z})^3},\frac{z}{(1-z)^2}-\frac{\bar{z}}{(1-\bar{z})^2};\alpha\right)\equiv \mathcal{TK}_{H}(\alpha)\]
are the classes of sense-preserving harmonic univalent functions $f$ which are fully starlike of order $\alpha$ $(0\leq \alpha <1)$ and fully convex of order $\alpha$ $(0\leq \alpha <1)$ respectively (see \cite{jahangiriconvex,jahangiristarlike,sumit2}). These classes were studied by Silverman and Silvia \cite{silverman} for the case $\alpha=0$. Recall that fully starlike functions of order $\alpha$ and fully convex functions of order $\alpha$ are characterized by the conditions
\[\frac{\partial}{\partial \theta}\arg f(r e^{i \theta})> \alpha\quad (0\leq\theta< 2\pi, 0<r<1)\]
and
\[\frac{\partial}{\partial \theta}\left(\arg \left\{\frac{\partial}{\partial \theta}f(r e^{i \theta})\right\}\right)> \alpha,\quad (0\leq\theta< 2\pi, 0<r<1)\]
respectively. In the similar fashion, it is easy to see that the subclasses of $\mathcal{TH}$ introduced by Ahuja \emph{et al.} \cite{ahuja1,ahuja2}, Dixit \emph{et al.} \cite{dixit}, Frasin \cite{frasin}, Murugusundaramoorthy \emph{et al.} \cite{raina} and Yal\c{c}in \emph{et al.} \cite{yalcin1,yalcin3} are special cases of our class $\mathcal{TH}(\Phi_{i},\Psi_{j};\alpha)$ for suitable choice of the functions $\Phi_i$ and $\Psi_j$.

In Section \ref{sec2}, we obtain the coefficient inequalities, growth and covering theorems, as well as closure theorems for functions in the class $\mathcal{TH}(\Phi_{i},\Psi_{j};\alpha)$. In particular, the invariance of the class $\mathcal{TH}(\Phi_{i},\Psi_{j};\alpha)$ under certain integral transforms and connection with hypergeometric functions is also established.

The study of harmonic mappings defined by using hypergeometric functions is a recent area of interest \cite{ahuja,ahuja3,raina}. Let $F(a,b,c;z)$ be the Gaussian hypergeometric function defined by
\begin{equation}\label{eq1.4}
F(a,b,c;z):=\sum_{n=0}^{\infty}\frac{(a)_n(b)_n}{(c)_n(1)_n}z^n,\quad z\in \mathbb{D}
\end{equation}
which is the solution of the second order homogeneous differential equation
\[z(1-z)w''(z)+[c-(a+b+1)z]w'(z)-abw(z)=0,\]
where $a, b, c$ are complex numbers with $c \neq 0,-1,-2,\ldots$, and $(\theta)_n$ is the Pochhammer symbol: $(\theta)_0=1$ and $(\theta)_n=\theta (\theta+1)\ldots(\theta+n-1)$ for $n=1,2,\ldots$. Since the hypergeometric series in \eqref{eq1.4} converges absolutely in $\mathbb{D}$, it follows that $F(a,b,c;z)$ defines an analytic function in $\mathbb{D}$ and plays an important role in the theory of univalent functions. We have obtained necessary and sufficient conditions for a harmonic function associated with hypergeometric functions to be in the class $\mathcal{TH}(\Phi_{i},\Psi_{j};\alpha)$. The well-known Gauss's summation theorem: if $\RE (c-a-b)>0$ then
\[F(a,b,c;1)=\frac{\Gamma (c) \Gamma(c-a-b)}{\Gamma(c-a)\Gamma (c-b)},\quad c \neq 0,-1,-2,\ldots\]
will be frequently used in this paper.

For $\gamma >-1$ and $-1\leq \delta <1$, let $L_\gamma:\mathcal{H}\to \mathcal{H}$ and $G_\delta:\mathcal{H}\to \mathcal{H}$ be the integral transforms defined by
\begin{equation}\label{eq1.5}
L_\gamma[f](z):=\frac{\gamma+1}{z^\gamma}\int_{0}^{z}t^{\gamma-1}h(t)\, dt+\overline{\frac{\gamma+1}{z^\gamma}\int_{0}^{z}t^{\gamma-1}g(t)\, dt},
\end{equation}
and
\begin{equation}\label{eq1.6}
G_\delta[f](z):=\int_0^z \frac{h(t)-h(\delta t)}{(1-\delta)t}\, dt+\overline{\int_0^z \frac{g(t)-g(\delta t)}{(1-\delta)t}\, dt}.
\end{equation}
where $f=h+\overline{g}\in \mathcal{H}$ and $z\in \mathbb{D}$. It has been shown that the class $\mathcal{TH}(\Phi_{i},\Psi_{j};\alpha)$ is preserved under these integral transforms.

For $0\leq \alpha<1$, let
\[\mathcal{TU}_H(\alpha):=\mathcal{TH}\left(\frac{z}{1-z}+\frac{\bar{z}}{1-\bar{z}},z;\alpha\right)\]
The last section of the paper investigates the properties of functions in the class $\mathcal{TU}_H(\alpha)$. Moreover, inclusion relations are obtained between the classes $\mathcal{TU}_H(\alpha)$, $\mathcal{TS}_H^*(\alpha)$ and $\mathcal{TK}_H(\alpha)$ under certain milder conditions.

\section{The Class $\mathcal{TH}(\Phi_{i},\Psi_{j};\alpha)$}\label{sec2}
The first theorem of this section provides a sufficient coefficient condition for a function to be in the class $\mathcal{H}(\Phi_{i},\Psi_{j};\alpha)$.

\begin{theorem}\label{th2.1}
Let the function $f=h+\bar{g}$ be such that $h$ and $g$ are given by \eqref{eq1.1}. Furthermore, let
\begin{equation}\label{eq2.1}
\sum_{n=2}^{\infty}\frac{p_{n}-\alpha u_{n}}{1-\alpha}|A_n|+\sum_{n=1}^{\infty}\frac{q_{n}-(-1)^{j-i}\alpha v_{n}}{1-\alpha}|B_n|\leq 1
\end{equation}
where $0\leq \alpha<1$, $i,j\in \{0,1\}$, $p_n > u_n\geq 0$ $(n=2,3,\ldots)$ and $q_n > v_n\geq 0$ $(n=1,2,\ldots)$. Then $f \in \mathcal{H}(\Phi_{i},\Psi_{j};\alpha)$.
\end{theorem}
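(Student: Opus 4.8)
The plan is to invoke the standard description of the half-plane $\{w:\RE w>\alpha\}$: since $1>\alpha$, the reflection of $1$ in the line $\RE w=\alpha$ is $2\alpha-1$, so $\RE w>\alpha$ is equivalent to $|w-1|<|w-(2\alpha-1)|$. Taking $w=(f*\Phi_i)(z)/(f*\Psi_j)(z)$ and clearing the (automatically nonzero) denominator, it suffices to prove
\[
\bigl|(f*\Phi_i)(z)-(f*\Psi_j)(z)\bigr|<\bigl|(f*\Phi_i)(z)+(1-2\alpha)(f*\Psi_j)(z)\bigr|
\qquad(z\in\mathbb{D}\setminus\{0\});
\]
the point $z=0$ needs no attention, since both convolutions equal $z+O(z^2)$, so the quotient tends to $1>\alpha$ there.

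First I would write the two convolutions out from the definition $f*F=h*H+\overline{g*G}$:
\[
(f*\Phi_i)(z)=z+\sum_{n=2}^{\infty}p_nA_nz^n+(-1)^i\sum_{n=1}^{\infty}q_nB_n\bar z^n,\qquad
(f*\Psi_j)(z)=z+\sum_{n=2}^{\infty}u_nA_nz^n+(-1)^j\sum_{n=1}^{\infty}v_nB_n\bar z^n.
\]
In the difference the linear terms cancel, while in $(f*\Phi_i)+(1-2\alpha)(f*\Psi_j)$ they combine to $(2-2\alpha)z$. The key elementary observation is that, because $q_n>v_n\ge 0$ and $0\le\alpha<1$,
\[
\bigl|(-1)^iq_n-(-1)^jv_n\bigr|=q_n-(-1)^{j-i}v_n\ge 0,\qquad
\bigl|(-1)^iq_n+(1-2\alpha)(-1)^jv_n\bigr|=q_n+(1-2\alpha)(-1)^{j-i}v_n\ge 0;
\]
this is exactly where the sign $(-1)^{j-i}$ appearing in \eqref{eq2.1} comes from, and it is the one place where the hypotheses on $p_n,q_n,u_n,v_n$ are genuinely used.

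Next I would estimate the right-hand side from below and the left-hand side from above by the triangle inequality, factor out $|z|$, and use $|z|^{n-1}\le 1$ on $\mathbb{D}$, together with the identities $(p_n+(1-2\alpha)u_n)+(p_n-u_n)=2(p_n-\alpha u_n)$ and its co-analytic analogue. For $z\ne 0$ this yields
\[
\frac{\bigl|(f*\Phi_i)+(1-2\alpha)(f*\Psi_j)\bigr|-\bigl|(f*\Phi_i)-(f*\Psi_j)\bigr|}{|z|}
\ge 2(1-\alpha)-2\sum_{n=2}^{\infty}(p_n-\alpha u_n)|A_n|-2\sum_{n=1}^{\infty}\bigl(q_n-(-1)^{j-i}\alpha v_n\bigr)|B_n|,
\]
and the right-hand side is $\ge 0$ precisely by hypothesis \eqref{eq2.1}. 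This already gives $\RE (f*\Phi_i)(z)/(f*\Psi_j)(z)\ge\alpha$. To sharpen $\ge$ to the strict inequality required by the definition, I would observe that for $z\ne 0$ one has $|z|^{n-1}<1$ for every $n\ge 2$, so the estimate is strict as soon as some $A_n$ $(n\ge 2)$ or some $B_n$ $(n\ge 2)$ is nonzero; the two remaining cases $f(z)=z$ and $f(z)=z+B_1\bar z$ are disposed of by direct inspection. I expect no conceptual obstacle — the proof is of the familiar Silverman type — the only thing to watch being the consistent bookkeeping of the factor $(-1)^{j-i}$ through the moduli of the co-analytic coefficients.
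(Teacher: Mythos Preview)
Your proof is correct and follows essentially the same approach as the paper: both use the equivalence $\RE w>\alpha \Leftrightarrow |w-1|<|w+1-2\alpha|$, expand the two convolutions, and bound $|C+(1-2\alpha)D|-|C-D|$ via the triangle inequality to reduce to hypothesis~\eqref{eq2.1}. Your explicit verification that $\bigl|(-1)^iq_n\pm(-1)^j(\cdot)v_n\bigr|=q_n\pm(-1)^{j-i}(\cdot)v_n$ and your separate handling of strictness in the degenerate cases $f(z)=z$ and $f(z)=z+B_1\bar z$ are more careful than the paper's own presentation, but the argument is otherwise identical.
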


\begin{proof}
Using the fact that $\RE w>\alpha$ if and only if $|w-1|<|w+1-2\alpha|$, it suffices to show that
\begin{equation}\label{eq2.2}
|C(z)+(1-2\alpha)D(z)|-|C(z)-D(z)|\geq0,
\end{equation}
where
\[C(z)=(f*\Phi_i)(z)=z+\sum_{n=2}^{\infty}A_n p_n z^n+(-1)^i\sum_{n=1}^{\infty}B_n q_n \bar{z}^{n}\]
and
\[D(z)=(f*\Psi_j)(z)=z+\sum_{n=2}^{\infty}A_n u_n z^n+(-1)^j\sum_{n=1}^{\infty}B_n v_n \bar{z}^{n}.\]
Substituting for $C(z)$ and $D(z)$ in \eqref{eq2.2} and making use of \eqref{eq2.1} we obtain
\begin{align*}
|C(z)&+(1-2\alpha)D(z)|-|C(z)-D(z)|\\
&=\left|2(1-\alpha)z+\sum_{n=2}^{\infty}(p_n+(1-2\alpha)u_n)A_n z^n\right.\\
&\quad\qquad\qquad\left.+(-1)^i\sum_{n=1}^{\infty}(q_n+(-1)^{j-i}(1-2\alpha)v_n)B_n \bar{z}^n\right|\\
&\quad\qquad\qquad-\left|\sum_{n=2}^{\infty} (p_n-u_n)A_n z^n+(-1)^i\sum_{n=1}^{\infty} (q_n-(-1)^{j-i}v_n)B_n\bar{z}^{n}\right|\\
&\geq2(1-\alpha)|z|-\sum_{n=2}^{\infty}(p_n+(1-2\alpha)u_n)|A_n| |z|^n\\
&\quad\qquad\qquad-\sum_{n=1}^{\infty}(q_n+(-1)^{j-i}(1-2\alpha)v_n)|B_n| |z|^n\\
&\qquad\qquad\qquad-\sum_{n=2}^{\infty} (p_n-u_n)|A_n| |z|^n-\sum_{n=1}^{\infty} (q_n-(-1)^{j-i}v_n)|B_n||z|^{n}\\
&=2(1-\alpha)|z|\left[1-\sum_{n=2}^{\infty}\frac{p_n-\alpha u_n}{1-\alpha}|A_n||z|^{n-1}\right.\\
&\quad\qquad\qquad\left.-\sum_{n=1}^{\infty}\frac{q_n-(-1)^{j-i}\alpha v_n}{1-\alpha}|B_n||z|^{n-1}\right]\\
&>2(1-\alpha)|z|\left[1-\sum_{n=2}^{\infty}\frac{p_n-\alpha u_n}{1-\alpha}|A_n|-\sum_{n=1}^{\infty}\frac{q_n-(-1)^{j-i}\alpha v_n}{1-\alpha}|B_n|\right]\geq 0.
\end{align*}
The harmonic mappings
\[f(z)=z+\sum_{n=2}^{\infty}\frac{1-\alpha}{p_n-\alpha u_n}x_n z^n+\sum_{n=1}^{\infty} \frac{1-\alpha}{q_{n}-(-1)^{j-i}\alpha v_{n}}\bar{y}_n \bar{z}^n,\]
where $\sum_{n=2}^{\infty}|x_n|+\sum_{n=1}^{\infty}|y_n|=1$, show that the coefficient bound given by \eqref{eq2.1} is sharp.
\end{proof}

\begin{remark}\label{rem2.2}
In addition to the hypothesis of Theorem \ref{th2.1}, if we assume that $p_n\geq n$ $(n=2,3,\ldots)$ and $q_n \geq n$ $(n=1,2,\ldots)$ then it is easy to deduce that $n(1-\alpha)\leq p_{n}-\alpha u_{n}$ $(n=2,3,\ldots)$ and $n(1-\alpha)\leq q_{n}-(-1)^{j-i}\alpha v_{n}$ $(n=1,2,\ldots)$ so that
\[\sum_{n=2}^{\infty}n|A_n|+\sum_{n=1}^{\infty}n|B_n|\leq\sum_{n=2}^{\infty}\frac{p_{n}-\alpha u_{n}}{1-\alpha}|A_n|+\sum_{n=1}^{\infty}\frac{q_{n}-(-1)^{j-i}\alpha v_{n}}{1-\alpha}|B_n|\leq 1.\]
By \cite[Theorem 1, p.\ 472]{jahangiristarlike}, $f\in \mathcal{S}_{H}$ and maps $\mathbb{D}$ onto a starlike domain.
\end{remark}

Theorem \ref{th2.1} gives a sufficient condition for the harmonic function $\phi_1+\overline{\phi}_2$ to be in the class
$\mathcal{H}(\Phi_{i},\Psi_{j};\alpha)$ where $\phi_1(z)\equiv \phi_1(a_1,b_1,c_1;z)$ and $\phi_2(z)\equiv \phi_2(a_2,b_2,c_2;z)$ are the hypergeometric functions defined by
\begin{equation}\label{hyper1}
\phi_1(z):=z F(a_1,b_1,c_1;z)\quad \mbox{and}\quad \phi_2(z):=F(a_2,b_2,c_2;z)-1.
\end{equation}

\begin{corollary}\label{cor-hyper1}
Let $a_k,b_k,c_k >0$ for $k=1,2$. Furthermore, let
\begin{equation}\label{hyper3}
\sum_{n=2}^{\infty}\frac{p_{n}-\alpha u_{n}}{1-\alpha}\frac{(a_1)_{n-1}(b_1)_{n-1}}{(c_1)_{n-1}(1)_{n-1}}+\sum_{n=1}^{\infty}\frac{q_{n}-(-1)^{j-i}\alpha v_{n}}{1-\alpha}\frac{(a_2)_n(b_2)_n}{(c_2)_n(1)_n}\leq 1
\end{equation}
where $0\leq \alpha<1$, $i,j\in \{0,1\}$, $p_n > u_n\geq 0$ $(n=2,3,\ldots)$ and $q_n > v_n\geq 0$ $(n=1,2,\ldots)$. Then $\phi_1+\overline{\phi}_2 \in \mathcal{H}(\Phi_{i},\Psi_{j};\alpha)$, $\phi_1$ and $\phi_2$ being given by \eqref{hyper1}.
\end{corollary}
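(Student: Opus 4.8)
The plan is to derive this corollary as a direct specialization of Theorem \ref{th2.1}. The key observation is that the hypergeometric functions $\phi_1$ and $\phi_2$ in \eqref{hyper1} have precisely the Taylor coefficients that turn \eqref{eq2.1} into \eqref{hyper3}. Writing $\phi_1(z) = zF(a_1,b_1,c_1;z) = z + \sum_{n=2}^{\infty}\frac{(a_1)_{n-1}(b_1)_{n-1}}{(c_1)_{n-1}(1)_{n-1}}z^n$, we read off that the analytic part $h = \phi_1$ has $A_n = (a_1)_{n-1}(b_1)_{n-1}/((c_1)_{n-1}(1)_{n-1})$ for $n \geq 2$; similarly $\phi_2(z) = F(a_2,b_2,c_2;z) - 1 = \sum_{n=1}^{\infty}\frac{(a_2)_n(b_2)_n}{(c_2)_n(1)_n}z^n$, so the co-analytic part $g = \phi_2$ has $B_n = (a_2)_n(b_2)_n/((c_2)_n(1)_n)$ for $n \geq 1$. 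Since $a_k, b_k, c_k > 0$, all Pochhammer symbols are positive, hence $A_n \geq 0$ and $B_n \geq 0$, and in particular $|A_n| = A_n$, $|B_n| = B_n$.

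First I would verify that $f := \phi_1 + \overline{\phi}_2$ is of the form required in Theorem \ref{th2.1}, namely $f = h + \bar g$ with $h,g$ as in \eqref{eq1.1}: indeed $h(0) = 0$ and $h'(0) = 1$ because the $z$-coefficient of $zF(a_1,b_1,c_1;z)$ is $F(a_1,b_1,c_1;0) = 1$, and $g(0) = 0$ since we subtracted the constant term. Then I would substitute the explicit coefficients into the left-hand side of the hypothesis \eqref{eq2.1} of Theorem \ref{th2.1}; this substitution is purely mechanical and yields exactly the left-hand side of \eqref{hyper3}. The assumed inequality \eqref{hyper3} is therefore identical to \eqref{eq2.1}, so Theorem \ref{th2.1} applies and gives $f = \phi_1 + \overline{\phi}_2 \in \mathcal{H}(\Phi_i,\Psi_j;\alpha)$.

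There is essentially no obstacle here — the proof is a one-line invocation of Theorem \ref{th2.1} after recording the coefficients. The only point requiring a word of care is the positivity of the coefficients (needed so that the absolute values in \eqref{eq2.1} disappear cleanly), which follows immediately from $a_k, b_k, c_k > 0$; and the normalization check on $h$, which is automatic from $F(a_1,b_1,c_1;0)=1$. I would write the proof in two or three sentences: identify $A_n$ and $B_n$, note they are nonnegative, and observe that \eqref{hyper3} is then a restatement of \eqref{eq2.1}, whence the conclusion follows from Theorem \ref{th2.1}.
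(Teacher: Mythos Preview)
Your proposal is correct and matches the paper's own approach: the corollary is stated immediately after Theorem \ref{th2.1} with no separate proof, the preceding sentence simply noting that Theorem \ref{th2.1} gives a sufficient condition for $\phi_1+\overline{\phi}_2$ to lie in $\mathcal{H}(\Phi_i,\Psi_j;\alpha)$. Your identification of the coefficients $A_n$, $B_n$ from the hypergeometric series and the observation that their positivity (from $a_k,b_k,c_k>0$) removes the absolute values in \eqref{eq2.1} is exactly the intended one-line deduction.
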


The next corollary provides a sufficient condition for $\psi_1+\overline{\psi}_2$ to belong to the class $\mathcal{H}(\Phi_{i},\Psi_{j};\alpha)$ where $\psi_1(z)\equiv \psi_1(a_1,b_1,c_1;z)$ and $\psi_2(z)\equiv \psi_2(a_2,b_2,c_2;z)$ are analytic functions defined by
\begin{equation}\label{hyper2}
\psi_1(z):=\int_0^z F(a_1,b_1,c_1;t)\,dt\quad \mbox{and}\quad \psi_2(z):=\int_0^z (F(a_2,b_2,c_2;t)-1)\,dt.
\end{equation}

\begin{corollary}\label{cor-hyper2}
Suppose that $a_k,b_k,c_k >0$ for $k=1,2$ and
\begin{equation}\label{hyper4}
\sum_{n=2}^{\infty}\frac{p_{n}-\alpha u_{n}}{1-\alpha}\frac{(a_1)_{n-1}(b_1)_{n-1}}{(c_1)_{n-1}(1)_{n}}+\sum_{n=2}^{\infty}\frac{q_{n}-(-1)^{j-i}\alpha v_{n}}{1-\alpha}\frac{(a_2)_{n-1}(b_2)_{n-1}}{(c_2)_{n-1}(1)_n}\leq 1
\end{equation}
where $0\leq \alpha<1$, $i,j\in \{0,1\}$, $p_n > u_n\geq 0$ $(n=2,3,\ldots)$ and $q_n > v_n\geq 0$ $(n=2,3,\ldots)$. Then $\psi_1+\overline{\psi}_2 \in \mathcal{H}(\Phi_{i},\Psi_{j};\alpha)$, $\psi_1$ and $\psi_2$ being given by \eqref{hyper2}.
\end{corollary}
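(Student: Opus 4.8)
The plan is to reduce the statement to a direct application of Theorem \ref{th2.1}. First I would expand the integrands via the series \eqref{eq1.4}. Since $F(a_1,b_1,c_1;t)=\sum_{n=0}^{\infty}\frac{(a_1)_n(b_1)_n}{(c_1)_n(1)_n}t^n$, termwise integration (legitimate because the series converges absolutely and locally uniformly in $\mathbb{D}$) and a shift of the summation index give
\[\psi_1(z)=z+\sum_{n=2}^{\infty}\frac{(a_1)_{n-1}(b_1)_{n-1}}{(c_1)_{n-1}(1)_{n-1}}\,\frac{z^n}{n}.\]
The one identity that does the work is $n\,(1)_{n-1}=n\,(n-1)!=n!=(1)_n$, which rewrites the coefficient of $z^n$ as $(a_1)_{n-1}(b_1)_{n-1}/[(c_1)_{n-1}(1)_n]$. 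The same computation applied to $\psi_2$, whose integrand $F(a_2,b_2,c_2;t)-1$ has vanishing constant term, yields
\[\psi_2(z)=\sum_{n=2}^{\infty}\frac{(a_2)_{n-1}(b_2)_{n-1}}{(c_2)_{n-1}(1)_n}\,z^n.\]

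Next I would record that $f=\psi_1+\overline{\psi}_2$ is of the form \eqref{eq1.1} with
\[A_n=\frac{(a_1)_{n-1}(b_1)_{n-1}}{(c_1)_{n-1}(1)_n}\ (n\ge 2),\qquad B_1=0,\qquad B_n=\frac{(a_2)_{n-1}(b_2)_{n-1}}{(c_2)_{n-1}(1)_n}\ (n\ge 2).\]
Since $a_k,b_k,c_k>0$ for $k=1,2$, every Pochhammer symbol occurring above is positive, so $|A_n|=A_n$ and $|B_n|=B_n$ throughout.

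Finally, substituting these values into the left-hand side of \eqref{eq2.1} and using $B_1=0$ to discard the $n=1$ term of the second sum, the hypothesis \eqref{eq2.1} of Theorem \ref{th2.1} becomes precisely inequality \eqref{hyper4}. Therefore Theorem \ref{th2.1} applies and gives $\psi_1+\overline{\psi}_2\in\mathcal{H}(\Phi_i,\Psi_j;\alpha)$, as claimed. There is no genuine obstacle here; the only steps needing care are the index shift combined with the factorial identity $n\,(1)_{n-1}=(1)_n$, and the observation that the co-analytic series starts at $n=2$ so that the $B_1$-term of \eqref{eq2.1} automatically drops out (the remaining hypotheses on $p_n,q_n,u_n,v_n$ and on $\alpha$ are inherited verbatim from Theorem \ref{th2.1}).
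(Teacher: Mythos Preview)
Your proposal is correct and follows exactly the approach the paper intends: the paper states this corollary without proof because it is an immediate specialization of Theorem~\ref{th2.1}, obtained by computing the Taylor coefficients of $\psi_1$ and $\psi_2$ from the series \eqref{eq1.4} via termwise integration and the identity $n\,(1)_{n-1}=(1)_n$, exactly as you do. Your observation that $B_1=0$ (so the $n=1$ term of \eqref{eq2.1} disappears) also explains why the hypothesis on $q_n,v_n$ in the corollary is only stated for $n\ge 2$.
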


It is worth to remark that Theorems 2.2, 2.4 and 2.11 of \cite{ahuja3} are particular cases of Corollary \ref{cor-hyper1}, while \cite[Theorem 2.8]{ahuja3} follows as a special case of Corollary \ref{cor-hyper2}. We next show that the coefficient condition \eqref{eq2.1} is also necessary for functions in $\mathcal{TH}(\Phi_{i},\Psi_{j};\alpha)$.
\begin{theorem}\label{th2.3}
Let the function $f=h+\bar{g}$ be such that $h$ and $g$ are given by \eqref{eq1.2}. Then $f \in \mathcal{TH}(\Phi_{i},\Psi_{j};\alpha)$ if and only if
\begin{equation}\label{eq2.3}
\sum_{n=2}^{\infty}\frac{p_{n}-\alpha u_{n}}{1-\alpha}A_n+\sum_{n=1}^{\infty}\frac{q_{n}-(-1)^{j-i}\alpha v_{n}}{1-\alpha}B_n\leq 1
\end{equation}
where $0\leq \alpha<1$, $i,j\in \{0,1\}$, $p_n > u_n\geq 0$ $(n=2,3,\ldots)$ and $q_n > v_n\geq 0$ $(n=1,2,\ldots)$.
\end{theorem}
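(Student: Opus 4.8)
The plan is to prove the two implications separately. The ``if'' direction is essentially already in hand: for $f=h+\bar g\in\mathcal{TH}$ the analytic and co-analytic coefficients appearing in \eqref{eq1.1} are $-A_n$ and $B_n$, so $|{-A_n}|=A_n$ and $|B_n|=B_n$, and condition \eqref{eq2.1} is literally condition \eqref{eq2.3}. Theorem \ref{th2.1} then gives $f\in\mathcal{H}(\Phi_i,\Psi_j;\alpha)$, and since $f\in\mathcal{TH}$ by hypothesis we conclude $f\in\mathcal{TH}(\Phi_i,\Psi_j;\alpha)$. Thus the real content is the ``only if'' direction.

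For necessity, suppose $f=h+\bar g\in\mathcal{TH}(\Phi_i,\Psi_j;\alpha)$ with $h,g$ as in \eqref{eq1.2}, and write
\[
C(z):=(f*\Phi_i)(z)=z-\sum_{n=2}^{\infty}A_np_nz^n+(-1)^i\sum_{n=1}^{\infty}B_nq_n\bar z^n
\]
and
\[
D(z):=(f*\Psi_j)(z)=z-\sum_{n=2}^{\infty}A_nu_nz^n+(-1)^j\sum_{n=1}^{\infty}B_nv_n\bar z^n,
\]
so that $\RE\bigl(C(z)/D(z)\bigr)>\alpha$ for all $z\in\mathbb{D}$. I would first restrict to $z=r\in(0,1)$: there $C(r)$ and $D(r)$ are real (all the coefficients and $r$ being real and nonnegative), $D(r)/r\to1$ as $r\to0^{+}$, and $D$ can have no zero in $\mathbb{D}\setminus\{0\}$ (otherwise the ratio $C/D$ would be undefined at such a point), so by continuity $D(r)>0$ throughout $(0,1)$. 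Hence the defining inequality collapses to the real inequality $C(r)-\alpha D(r)>0$, i.e.
\[
(1-\alpha)-\sum_{n=2}^{\infty}(p_n-\alpha u_n)A_nr^{n-1}+(-1)^i\sum_{n=1}^{\infty}\bigl(q_n-(-1)^{j-i}\alpha v_n\bigr)B_nr^{n-1}>0\qquad(0<r<1).
\]
Here $p_n-\alpha u_n=(p_n-u_n)+(1-\alpha)u_n>0$ and likewise $q_n-(-1)^{j-i}\alpha v_n>0$ (treating $j=i$ and $j\ne i$ separately), so all coefficients in the series are nonnegative; letting $r\to1^{-}$ and pushing the limit through the series term by term (Abel's theorem / monotone convergence) then yields exactly \eqref{eq2.3}.

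The step requiring genuine care is the handling of the conjugate (co-analytic) part, and there are really two things to watch. First, one must know that the denominator stays sign-definite along the curve used to pass to the boundary, so that clearing it does not reverse the inequality; this is why the argument is run along the positive real axis, where $D(r)>0$ is forced as above. Second, one must verify that the $B_n$-terms enter \eqref{eq2.3} with precisely the weight $q_n-(-1)^{j-i}\alpha v_n$: for $i=1$ the ray $z=r$ already delivers the $A_n$- and $B_n$-contributions with the favorable sign and the computation is clean, whereas for $i=0$ the factor $(-1)^i$ makes the co-analytic terms appear with the ``wrong'' sign on the positive axis, and one recovers the correct inequality by instead exploiting the equivalent form $\RE w>\alpha\iff|w-1|<|w+(1-2\alpha)|$ of the membership condition (as in the proof of Theorem \ref{th2.1}) along a suitably chosen ray, or by testing the condition at boundary points that realign the conjugate contributions. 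Getting this sign bookkeeping right, together with justifying the $r\to1^{-}$ limit interchange, is the main obstacle; the rest is routine, and the sharpness of \eqref{eq2.3} for functions in $\mathcal{TH}$ follows from the extremal mappings already exhibited after Theorem \ref{th2.1} (with $x_n,y_n\ge0$).
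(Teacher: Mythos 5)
Your sufficiency argument and your treatment of the necessity for $i=1$ are correct and coincide with the paper's own proof, which likewise restricts to the positive real axis, uses the positivity of the denominator there, and lets $r\to1^{-}$. The genuine gap is exactly the point you flag for $i=0$, and it cannot be repaired: the ``only if'' direction is in fact false in that case. At $z=re^{i\theta}$ the co-analytic terms contribute $(-1)^{i}B_nq_n\RE(\bar z^{n}/z)=(-1)^{i}B_nq_nr^{n-1}\cos((n+1)\theta)$, so to make the sum $\sum(q_n-(-1)^{j-i}\alpha v_n)B_n$ enter the inequality with a plus sign when $i=0$ you would need one value of $\theta$ with $\cos((n+1)\theta)=-1$ for every $n\ge1$ carrying a nonzero $B_n$; already $n=1$ and $n=2$ are incompatible, since $2\theta\equiv\pi$ and $3\theta\equiv\pi\ (\mathrm{mod}\ 2\pi)$ have no common solution. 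Neither the reformulation $|w-1|<|w+1-2\alpha|$ nor any other choice of test points avoids this, because distinct powers of $\bar z$ cannot be simultaneously realigned.

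Concretely, take $\Phi_0(z)=z/(1-z)+\bar z/(1-\bar z)$ (so $i=0$, $p_n=q_n=1$), $\Psi(z)=z$ and $\alpha=0$; the class is then $\{f\in\mathcal{TH}:\RE(f(z)/z)>0\}$ and \eqref{eq2.3} reads $\sum A_n+\sum B_n\le1$. For $f(z)=z+\tfrac{51}{100}\bar z+\tfrac{51}{100}\bar z^{2}$ one has $\RE\bigl(f(z)/z\bigr)=1+\tfrac{51}{100}\bigl(\cos2\theta+r\cos3\theta\bigr)$, and since for $r\in[0,1]$ the quantity $\cos2\theta+r\cos3\theta$ is linear in $r$ and hence bounded below by $\min\{\cos2\theta,\ \cos2\theta+\cos3\theta\}\ge\min_\theta(\cos2\theta+\cos3\theta)>-1.64$, we get $\RE\bigl(f(z)/z\bigr)>1-\tfrac{51}{100}\cdot 1.64>0$ throughout $\mathbb{D}$; yet $B_1+B_2=\tfrac{102}{100}>1$. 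So for $i=0$ condition \eqref{eq2.3} is sufficient but not necessary. The paper's proof has the same defect --- it writes the positive-real-axis estimate with every co-analytic term entering negatively, which is only valid for $i=1$ --- so for $i=1$ your argument (including the observation that $D(r)>0$ on $(0,1)$ and the term-by-term passage to the limit) is complete, but the theorem as stated, and hence any proof of it, fails for $i=0$.
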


\begin{proof}
The sufficient part follows by Theorem \ref{th2.1} upon noting that $\mathcal{TH}(\Phi_{i},\Psi_{j};\alpha)\subset \mathcal{H}(\Phi_{i},\Psi_{j};\alpha)$. For the necessary part, let $f \in \mathcal{TH}(\Phi_{i},\Psi_{j};\alpha)$. Then \eqref{eq1.3} yields
\begin{align*}
\RE &\left(\frac{(f*\Phi_i)(z)}{(f*\Psi_j)(z)}-\alpha\right)\\
&=\RE\left(\frac{(1-\alpha)z+\sum_{n=2}^{\infty}(p_n-\alpha u_n)A_n z^n+(-1)^i\sum_{n=1}^{\infty}(q_n-(-1)^{j-i}\alpha v_n)B_n\bar{z}^n}{z+\sum_{n=2}^{\infty}A_n u_n z^n+(-1)^j\sum_{n=1}^{\infty}B_n v_n \bar{z}^{n}}\right)\\
&\geq \frac{(1-\alpha)-\sum_{n=2}^{\infty}(p_n-\alpha u_n)A_n |z|^{n-1}-\sum_{n=1}^{\infty}(q_n-(-1)^{j-i}\alpha v_n)B_n|z|^{n-1}}{1+\sum_{n=2}^{\infty}A_n u_n |z|^{n-1}+\sum_{n=1}^{\infty}B_n v_n |z|^{n-1}}>0.
\end{align*}
The above inequality must hold for all $z \in \mathbb{D}$. In particular, choosing the values of $z$ on the positive real axis and letting $z\rightarrow 1^{-}$ we obtain the required condition \eqref{eq2.3}.
\end{proof}

Theorem \ref{th2.3} immediately yields the following three corollaries.

\begin{corollary}\label{cor2.4}
For $f=h+\bar{g}\in \mathcal{TH}(\Phi_{i},\Psi_{j};\alpha)$ where $h$ and $g$ are given by \eqref{eq1.2}, we have
\[A_n\leq\frac{1-\alpha}{p_n-\alpha u_n}\, (n=2,3,\ldots)\quad \mbox{and}\quad B_n\leq \frac{1-\alpha}{q_{n}-(-1)^{j-i}\alpha v_{n}}\, (n=1,2,\ldots);\]
the result being sharp, for each $n$.
\end{corollary}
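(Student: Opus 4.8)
The plan is to read the two bounds straight off the necessary-and-sufficient coefficient condition \eqref{eq2.3} of Theorem \ref{th2.3}. First I would record that, under the standing hypotheses $p_n > u_n\geq 0$, $q_n > v_n\geq 0$ and $0\leq\alpha<1$, every weight occurring in \eqref{eq2.3} is strictly positive: indeed $p_n-\alpha u_n\geq p_n-u_n>0$, and when $j-i$ is even one has $q_n-\alpha v_n\geq q_n-v_n>0$, while when $j-i$ is odd the quantity $q_n+\alpha v_n$ is trivially positive. Since moreover $A_n,B_n\geq 0$ for $f\in\mathcal{TH}$, the left-hand side of \eqref{eq2.3} is a sum of nonnegative terms, hence each individual term is at most the whole sum, which is $\leq 1$.

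Fixing $m\geq 2$ and discarding from \eqref{eq2.3} every summand except the one carrying $A_m$ gives $\dfrac{p_m-\alpha u_m}{1-\alpha}A_m\leq 1$, i.e.\ $A_m\leq\dfrac{1-\alpha}{p_m-\alpha u_m}$; fixing $m\geq 1$ and retaining only the $B_m$-term gives $B_m\leq\dfrac{1-\alpha}{q_m-(-1)^{j-i}\alpha v_m}$. This establishes the asserted inequalities.

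For sharpness I would, for each fixed $m$, exhibit the extremal member of $\mathcal{TH}(\Phi_i,\Psi_j;\alpha)$ obtained by specializing the family displayed at the end of the proof of Theorem \ref{th2.1} to a single term. To see that the bound on $A_m$ is attained, take
\[
f(z)=z-\frac{1-\alpha}{p_m-\alpha u_m}\,z^m,
\]
which is of the form \eqref{eq1.2} and makes the left-hand side of \eqref{eq2.3} equal to $1$, so by Theorem \ref{th2.3} it belongs to $\mathcal{TH}(\Phi_i,\Psi_j;\alpha)$, while its $m$-th analytic coefficient equals $\frac{1-\alpha}{p_m-\alpha u_m}$. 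Likewise
\[
f(z)=z+\frac{1-\alpha}{q_m-(-1)^{j-i}\alpha v_m}\,\bar{z}^m
\]
shows that the bound on $B_m$ is sharp.

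There is essentially no obstacle in this argument; the only point that needs a word of care is the verification that the denominators $p_m-\alpha u_m$ and $q_m-(-1)^{j-i}\alpha v_m$ are positive, so that the quotients make sense and the extremal functions are well defined — and, as noted above, this is immediate from $p_n>u_n\geq 0$, $q_n>v_n\geq 0$ and $0\leq\alpha<1$.
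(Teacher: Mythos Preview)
Your argument is correct and is exactly what the paper intends: it states that Corollary~\ref{cor2.4} follows immediately from Theorem~\ref{th2.3}, which is precisely the ``each nonnegative summand in \eqref{eq2.3} is at most $1$'' observation you spell out, and your extremal functions are the same ones appearing at the end of the proof of Theorem~\ref{th2.1} (and again as the extreme points $h_n$, $g_n$ in Theorem~\ref{th2.8}).
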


\begin{corollary}\label{cor-hyper3}
Let $a_k,b_k,c_k >0$ for $k=1,2$ and $\phi_1,\phi_2$ be given by \eqref{hyper1}. Then a necessary and sufficient condition for the harmonic function $\Phi(z)=2z-\phi_1(z)+\overline{\phi_2(z)}$ to be in the class $\mathcal{TH}(\Phi_{i},\Psi_{j};\alpha)$ is that \eqref{hyper3} is satisfied.
\end{corollary}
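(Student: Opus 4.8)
The plan is to recognize $\Phi=2z-\phi_1+\overline{\phi_2}$ as a member of $\mathcal{TH}$ whose Taylor coefficients are precisely the hypergeometric coefficients appearing in \eqref{hyper3}, and then to quote Theorem \ref{th2.3} verbatim. First I would expand the two building blocks using \eqref{eq1.4}: since $\phi_1(z)=zF(a_1,b_1,c_1;z)=z+\sum_{n=2}^{\infty}\frac{(a_1)_{n-1}(b_1)_{n-1}}{(c_1)_{n-1}(1)_{n-1}}z^{n}$, we get
\[
2z-\phi_1(z)=z-\sum_{n=2}^{\infty}\frac{(a_1)_{n-1}(b_1)_{n-1}}{(c_1)_{n-1}(1)_{n-1}}z^{n},
\]
and since $\phi_2(z)=F(a_2,b_2,c_2;z)-1=\sum_{n=1}^{\infty}\frac{(a_2)_n(b_2)_n}{(c_2)_n(1)_n}z^{n}$, the co-analytic part is $g(z)=\sum_{n=1}^{\infty}\frac{(a_2)_n(b_2)_n}{(c_2)_n(1)_n}z^{n}$.

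Next I would check the sign conditions required for membership in $\mathcal{TH}$. Because $a_k,b_k,c_k>0$, every Pochhammer symbol $(a_k)_m,(b_k)_m,(c_k)_m$ is a product of positive reals and hence positive; consequently the coefficients
\[
A_n=\frac{(a_1)_{n-1}(b_1)_{n-1}}{(c_1)_{n-1}(1)_{n-1}}\ \ (n\geq2),\qquad B_n=\frac{(a_2)_n(b_2)_n}{(c_2)_n(1)_n}\ \ (n\geq1)
\]
are all nonnegative. Therefore $\Phi=h+\bar g$ with $h,g$ of the form \eqref{eq1.2}, i.e. $\Phi\in\mathcal{TH}$.

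Finally I would invoke Theorem \ref{th2.3}: with $\Phi\in\mathcal{TH}$, the function $\Phi$ lies in $\mathcal{TH}(\Phi_i,\Psi_j;\alpha)$ if and only if \eqref{eq2.3} holds for its coefficients $A_n,B_n$. Substituting the explicit values of $A_n$ and $B_n$ above into \eqref{eq2.3} turns it into exactly inequality \eqref{hyper3}, which completes the proof. I do not anticipate a genuine obstacle here; the only points that need care are the index shift in the first sum (the coefficient of $z^n$ in $zF(a_1,b_1,c_1;z)$ involves $(a_1)_{n-1}$, not $(a_1)_n$) and the explicit reduction of \eqref{eq2.3} to \eqref{hyper3}, both of which are bookkeeping rather than substantive difficulties.
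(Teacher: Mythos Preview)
Your proposal is correct and matches the paper's approach exactly: the paper states that this corollary follows immediately from Theorem~\ref{th2.3}, and your argument simply unpacks that one-line justification by writing out the coefficients of $\Phi=2z-\phi_1+\overline{\phi_2}$, checking the sign condition for membership in $\mathcal{TH}$, and observing that \eqref{eq2.3} specializes to \eqref{hyper3}.
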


\begin{corollary}\label{cor-hyper4}
If $a_k,b_k,c_k >0$ for $k=1,2$, then $\Psi(z)=2z-\psi_1(z)+\overline{\psi_2(z)} \in \mathcal{TH}(\Phi_{i},\Psi_{j};\alpha)$ if and only if condition \eqref{hyper4} holds, where $\psi_1,\psi_2$ are given by \eqref{hyper2}.
\end{corollary}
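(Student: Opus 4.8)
The plan is to reduce the statement directly to Theorem \ref{th2.3} once $\Psi$ has been written in the canonical form \eqref{eq1.2}. First I would expand the two integrands as power series. Since $F(a_1,b_1,c_1;t)=1+\sum_{n=1}^{\infty}\frac{(a_1)_n(b_1)_n}{(c_1)_n(1)_n}t^n$ and $F(a_2,b_2,c_2;t)-1=\sum_{n=1}^{\infty}\frac{(a_2)_n(b_2)_n}{(c_2)_n(1)_n}t^n$, and both series converge uniformly on compact subsets of $\mathbb{D}$, integrating term by term gives
\[\psi_1(z)=z+\sum_{n=2}^{\infty}\frac{(a_1)_{n-1}(b_1)_{n-1}}{(c_1)_{n-1}(1)_{n-1}}\frac{z^n}{n},\qquad \psi_2(z)=\sum_{n=2}^{\infty}\frac{(a_2)_{n-1}(b_2)_{n-1}}{(c_2)_{n-1}(1)_{n-1}}\frac{z^n}{n}.\]
Using the identity $n\,(1)_{n-1}=n!=(1)_n$, these become $\psi_1(z)=z+\sum_{n=2}^{\infty}\frac{(a_1)_{n-1}(b_1)_{n-1}}{(c_1)_{n-1}(1)_n}z^n$ and $\psi_2(z)=\sum_{n=2}^{\infty}\frac{(a_2)_{n-1}(b_2)_{n-1}}{(c_2)_{n-1}(1)_n}z^n$.

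Next I would form $\Psi(z)=2z-\psi_1(z)+\overline{\psi_2(z)}=h(z)+\overline{g(z)}$, so that
\[h(z)=z-\sum_{n=2}^{\infty}\frac{(a_1)_{n-1}(b_1)_{n-1}}{(c_1)_{n-1}(1)_n}z^n,\qquad g(z)=\sum_{n=2}^{\infty}\frac{(a_2)_{n-1}(b_2)_{n-1}}{(c_2)_{n-1}(1)_n}z^n.\]
One checks that $\Psi(0)=0$ and $h'(0)=2-F(a_1,b_1,c_1;0)=1$, so $\Psi\in\mathcal{H}$; moreover, since $a_k,b_k,c_k>0$ every Pochhammer symbol appearing is positive, hence all coefficients $A_n=\frac{(a_1)_{n-1}(b_1)_{n-1}}{(c_1)_{n-1}(1)_n}$ and $B_n=\frac{(a_2)_{n-1}(b_2)_{n-1}}{(c_2)_{n-1}(1)_n}$ are nonnegative (with $B_1=0$). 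Thus $\Psi\in\mathcal{TH}$ and it matches the form \eqref{eq1.2}.

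Finally I would invoke Theorem \ref{th2.3}: $\Psi\in\mathcal{TH}(\Phi_i,\Psi_j;\alpha)$ if and only if inequality \eqref{eq2.3} holds for these coefficients. Since $B_1=0$, the second sum in \eqref{eq2.3} starts effectively at $n=2$, and substituting the explicit values of $A_n$ and $B_n$ transforms \eqref{eq2.3} into exactly \eqref{hyper4}, which finishes the proof. There is no genuine obstacle here; the only points that require a little care are the index shift in the integration (the coefficient of $z^n$ in $\psi_k$ comes from the coefficient of $t^{n-1}$ in the integrand) and the simplification $n\,(1)_{n-1}=(1)_n$, both of which are routine.
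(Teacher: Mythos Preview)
Your proposal is correct and follows exactly the approach the paper intends: the corollary is listed as an immediate consequence of Theorem~\ref{th2.3}, and you have simply written out the details (the term-by-term integration, the index shift, the identification $n(1)_{n-1}=(1)_n$, and the observation $B_1=0$) that the paper leaves implicit. There is nothing to add or correct.
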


Note that \cite[Theorem 2.6]{ahuja3} is a particular case of Corollary \ref{cor-hyper3}. By making use of Theorem \ref{th2.3}, we obtain the following growth estimate for functions in the class $\mathcal{TH}(\Phi_{i},\Psi_{j};\alpha)$.

\begin{theorem}\label{th2.6}
Let $f \in \mathcal{TH}(\Phi_{i},\Psi_{j};\alpha)$, $\sigma_n=p_n-\alpha u_n$ $(n=2,3,\ldots)$ and $\Gamma_n=q_{n}-(-1)^{j-i}\alpha v_{n}$ $(n=1,2,\ldots)$. If $\{\sigma_n\}$ and $\{\Gamma_n\}$ are non-decreasing sequences, then
\[|f(z)|\leq (1+B_1)|z|+\frac{1-\alpha}{\eta}\left(1-\frac{q_1-(-1)^{j-i}\alpha v_1}{1-\alpha}B_1\right)|z|^2,\]
and
\[|f(z)|\geq (1-B_1)|z|-\frac{1-\alpha}{\eta}\left(1-\frac{q_1-(-1)^{j-i}\alpha v_1}{1-\alpha}B_1\right)|z|^2\]
for all $z \in \mathbb{D}$, where $\eta=\min \{\sigma_2,\Gamma_2\}$ and $B_1=f_{\bar{z}}(0)$.
\end{theorem}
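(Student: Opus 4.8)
The plan is to pass from the series representation of $f$ to the triangle inequality, replace all powers $|z|^n$ with $n\geq 2$ by $|z|^2$, and then convert the resulting tail sum $\sum_{n=2}^{\infty}(A_n+B_n)$ into the stated bound by invoking the necessary coefficient condition of Theorem \ref{th2.3} together with the monotonicity hypothesis.

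Concretely, first I would write, for $f=h+\bar g$ with $h,g$ as in \eqref{eq1.2} and $|z|=r<1$,
\[
|f(z)|\leq |z|+B_1|z|+\sum_{n=2}^{\infty}(A_n+B_n)|z|^n
\leq (1+B_1)r+r^2\sum_{n=2}^{\infty}(A_n+B_n),
\]
and symmetrically $|f(z)|\geq (1-B_1)r-r^2\sum_{n=2}^{\infty}(A_n+B_n)$, using $A_n,B_n\geq 0$ and $r^n\leq r^2$ for $n\geq2$. So everything reduces to estimating $\sum_{n=2}^{\infty}(A_n+B_n)$.

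For this, Theorem \ref{th2.3} gives $\sum_{n=2}^{\infty}\sigma_nA_n+\sum_{n=1}^{\infty}\Gamma_nB_n\leq 1-\alpha$, i.e.
\[
\sum_{n=2}^{\infty}\sigma_nA_n+\sum_{n=2}^{\infty}\Gamma_nB_n\leq (1-\alpha)-\Gamma_1B_1
=(1-\alpha)\Bigl(1-\tfrac{q_1-(-1)^{j-i}\alpha v_1}{1-\alpha}B_1\Bigr).
\]
Since $\{\sigma_n\}$ and $\{\Gamma_n\}$ are non-decreasing, $\sigma_n\geq\sigma_2$ and $\Gamma_n\geq\Gamma_2$ for every $n\geq2$, hence $\sigma_nA_n+\Gamma_nB_n\geq \eta(A_n+B_n)$ with $\eta=\min\{\sigma_2,\Gamma_2\}$. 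Dividing through by $\eta$ yields $\sum_{n=2}^{\infty}(A_n+B_n)\leq \tfrac{1-\alpha}{\eta}\bigl(1-\tfrac{q_1-(-1)^{j-i}\alpha v_1}{1-\alpha}B_1\bigr)$, and substituting this into the two displayed inequalities above gives exactly the asserted growth estimates.

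I do not anticipate a genuine obstacle here; the only point requiring a little care is the bookkeeping that isolates the first co-analytic coefficient $B_1$ (which appears linearly in $|z|$ and whose weight $\Gamma_1$ need not be controlled by $\eta$) from the remaining coefficients before applying the monotonicity of $\{\sigma_n\}$ and $\{\Gamma_n\}$; once that separation is made, the rest is routine.
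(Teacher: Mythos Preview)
Your proof is correct and follows essentially the same approach as the paper: triangle inequality, replacement of $|z|^n$ by $|z|^2$, and then the coefficient inequality of Theorem~\ref{th2.3} combined with the monotonicity of $\{\sigma_n\}$ and $\{\Gamma_n\}$ to bound $\sum_{n\geq 2}(A_n+B_n)$. The only cosmetic difference is that you isolate the estimate for the tail sum as a separate step before substituting, whereas the paper folds it directly into a single chain of inequalities.
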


\begin{proof}
Writing $f=h+\bar{g}$ where $h$ and $g$ are given by \eqref{eq1.2}, we have
\begin{align*}
|f(z)|&\leq (1+B_1)|z|+\sum_{n=2}^{\infty} (A_n+B_n)|z|^{n}\\
      &\leq (1+B_1)|z|+\frac{1-\alpha}{\eta}\sum_{n=2}^{\infty} \left(\frac{\eta}{1-\alpha}A_n+\frac{\eta}{1-\alpha}B_n\right)|z|^{2}\\
      &\leq (1+B_1)|z|+\frac{1-\alpha}{\eta}\sum_{n=2}^{\infty} \left(\frac{p_{n}-\alpha u_{n}}{1-\alpha}A_n+\frac{q_{n}-(-1)^{j-i}\alpha v_{n}}{1-\alpha}B_n\right)|z|^{2}\\
      &\leq(1+B_1)|z|+\frac{1-\alpha}{\eta}\left(1-\frac{q_1-(-1)^{j-i}\alpha v_1}{1-\alpha}B_1\right)|z|^2
\end{align*}
using the hypothesis and applying Theorem \ref{th2.3}.

The proof of the left hand inequality follows on lines similar to that of the right hand side inequality.
\end{proof}

The covering result for the class $\mathcal{TH}(\Phi_{i},\Psi_{j};\alpha)$ follows from the left hand inequality of Theorem \ref{th2.6}.

\begin{corollary}\label{cor2.7}
Under the hypothesis of Theorem \ref{th2.6}, we have
\[\left\{w \in \mathbb{C}:|w|<\frac{1}{\eta}(\eta-1+\alpha+(q_1-(-1)^{j-i}\alpha v_1-\eta)B_1)\right\}\subset f(\mathbb{D}).\]
\end{corollary}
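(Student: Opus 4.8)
The plan is to read the covering disk directly off the lower bound in Theorem~\ref{th2.6} via the standard ``distance to the boundary'' argument. Write $\Omega:=f(\mathbb{D})$ and put
\[
\rho:=\frac{1}{\eta}\Bigl(\eta-1+\alpha+\bigl(q_1-(-1)^{j-i}\alpha v_1-\eta\bigr)B_1\Bigr),
\]
so the goal is to show $\{w\in\mathbb{C}:|w|<\rho\}\subset\Omega$ (there is nothing to prove if $\rho\le 0$). Since $f$ is a sense-preserving univalent harmonic mapping, it is an open map, so $\Omega$ is a domain containing $f(0)=0$; if $\Omega=\mathbb{C}$ the inclusion is trivial, and otherwise it suffices to prove $\mathrm{dist}(0,\partial\Omega)\ge\rho$, because for an open set containing the origin the open ball about $0$ of radius $\mathrm{dist}(0,\partial\Omega)$ lies in that set.

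To estimate $\mathrm{dist}(0,\partial\Omega)$, I would fix an arbitrary $w_0\in\partial\Omega$. As $\Omega$ is open, $w_0\notin\Omega$ while $w_0\in\overline{\Omega}$, so there are points $z_n\in\mathbb{D}$ with $f(z_n)\to w_0$; moreover $|z_n|\to 1$, for a subsequential limit $\zeta\in\mathbb{D}$ would force $f(\zeta)=w_0\in\Omega$ by continuity, a contradiction. Feeding this sequence into the left-hand inequality of Theorem~\ref{th2.6} gives
\[
|f(z_n)|\ge(1-B_1)|z_n|-\frac{1-\alpha}{\eta}\Bigl(1-\frac{q_1-(-1)^{j-i}\alpha v_1}{1-\alpha}B_1\Bigr)|z_n|^2 ,
\]
and letting $n\to\infty$, so that $|z_n|\to1$, the right-hand side tends to $(1-B_1)-\frac{1-\alpha}{\eta}\bigl(1-\frac{q_1-(-1)^{j-i}\alpha v_1}{1-\alpha}B_1\bigr)$, which a one-line simplification identifies with $\rho$. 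Hence $|w_0|=\lim_n|f(z_n)|\ge\rho$. Since $w_0\in\partial\Omega$ was arbitrary, $\mathrm{dist}(0,\partial\Omega)\ge\rho$, which is the asserted inclusion.

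I expect no serious obstacle here: once the lower growth estimate of Theorem~\ref{th2.6} is in hand, the proof is bookkeeping together with the single topological fact that every point of $\partial\Omega$ is a cluster value of $f$ along a sequence tending to $\partial\mathbb{D}$ — and that fact needs only that $\Omega$ is a domain, i.e.\ that $f$ is open. The only computation worth isolating is checking that the $r\to1^{-}$ limit of $(1-B_1)r-\frac{1-\alpha}{\eta}\bigl(1-\frac{q_1-(-1)^{j-i}\alpha v_1}{1-\alpha}B_1\bigr)r^2$ equals the constant $\rho$ displayed above, which I would not grind through in the write-up.
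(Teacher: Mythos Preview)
Your approach is precisely what the paper intends: it offers no proof beyond the sentence ``the covering result \ldots\ follows from the left hand inequality of Theorem~\ref{th2.6},'' and your argument is the standard way to cash that sentence out.

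One point deserves a flag, though. You write ``since $f$ is a sense-preserving univalent harmonic mapping, it is an open map,'' but the hypotheses of Theorem~\ref{th2.6} do not guarantee this. Membership in $\mathcal{TH}(\Phi_i,\Psi_j;\alpha)$ only places $f$ in $\mathcal{TH}\subset\mathcal{H}$, and $\mathcal{H}$ is the class of \emph{all} normalized harmonic functions, with no univalence or sense-preservation assumed. Indeed, later in Section~\ref{sec3} the paper itself observes that functions in $\mathcal{TU}_H(\alpha)$---a particular instance of $\mathcal{TH}(\Phi_i,\Psi_j;\alpha)$---``need not be univalent in $\mathbb{D}$.'' Without openness of $f$, the cluster-point step (every $w_0\in\partial\Omega$ is approached by $f(z_n)$ with $|z_n|\to1$) is not justified as written. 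The paper glosses over exactly the same issue, so your proof is no more deficient than the original; but strictly speaking the inclusion needs an extra hypothesis (for instance $p_n\ge n$ and $q_n\ge n$, as in Remark~\ref{rem2.2}) to force $f\in\mathcal{S}_H$ and make the topological step legitimate.
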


Using Theorem \ref{th2.3} it is easily seen that the class $\mathcal{TH}(\Phi_{i},\Psi_{j};\alpha)$ is convex and closed with respect to the topology of locally uniform convergence so that the closed convex hull of $\mathcal{TH}(\Phi_{i},\Psi_{j};\alpha)$ equals itself. The next theorem determines the extreme points of $\mathcal{TH}(\Phi_{i},\Psi_{j};\alpha)$.

\begin{theorem}\label{th2.8}
Suppose that $0\leq \alpha<1$, $i,j\in \{0,1\}$, $p_n > u_n\geq 0$ $(n=2,3,\ldots)$ and $q_n > v_n\geq 0$ $(n=1,2,\ldots)$. Set
\[h_1(z)=z,\quad h_n(z)=z-\frac{1-\alpha}{p_n-\alpha u_n}z^n\, (n=2,3,\ldots)\quad \mbox{and}\]
\[g_n(z)=z+\frac{1-\alpha}{q_{n}-(-1)^{j-i}\alpha v_{n}}\bar{z}^{n}\, (n=1,2,\ldots).\]
Then $f \in \mathcal{TH}(\Phi_{i},\Psi_{j};\alpha)$ if and only if it can be expressed in the form
\begin{equation}\label{eq2.4}
f(z)=\sum_{n=1}^{\infty}(X_n h_n+Y_n g_n)(z),\quad X_n\geq 0,\quad Y_n\geq0 \quad \mbox{and}\quad  \sum_{n=1}^{\infty}(X_n+Y_n)=1.
\end{equation}
In particular, the extreme points of $\mathcal{TH}(\Phi_{i},\Psi_{j};\alpha)$ are $\{h_n\}$ and $\{g_n\}$.
\end{theorem}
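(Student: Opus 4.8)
The plan is to establish the two directions of the ``if and only if'' separately, using the coefficient characterization from Theorem~\ref{th2.3} as the central tool. For the \emph{sufficiency} direction, suppose $f$ is given by \eqref{eq2.4}. I would first compute the explicit series expansion of $f$: writing out $\sum_{n=1}^{\infty}(X_nh_n+Y_ng_n)(z)$ and using $\sum(X_n+Y_n)=1$, the constant-type term collects to $z$ (since every $h_n$ and $g_n$ has leading term $z$), and the coefficient of $z^n$ for $n\geq 2$ is $-X_n(1-\alpha)/(p_n-\alpha u_n)$, while the coefficient of $\bar z^n$ for $n\geq 1$ is $Y_n(1-\alpha)/(q_n-(-1)^{j-i}\alpha v_n)$. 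Thus $f\in\mathcal{TH}$ in the form \eqref{eq1.2} with $A_n = X_n(1-\alpha)/(p_n-\alpha u_n)$ and $B_n = Y_n(1-\alpha)/(q_n-(-1)^{j-i}\alpha v_n)$. Plugging these into the left side of \eqref{eq2.3} makes the weights cancel, leaving $\sum_{n\geq2}X_n + \sum_{n\geq1}Y_n \leq \sum_{n\geq1}(X_n+Y_n) = 1$, so Theorem~\ref{th2.3} gives $f\in\mathcal{TH}(\Phi_i,\Psi_j;\alpha)$.

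For the \emph{necessity} direction, let $f=h+\bar g\in\mathcal{TH}(\Phi_i,\Psi_j;\alpha)$ with $h,g$ as in \eqref{eq1.2}. I would define $X_n := \frac{p_n-\alpha u_n}{1-\alpha}A_n$ for $n\geq2$, $Y_n := \frac{q_n-(-1)^{j-i}\alpha v_n}{1-\alpha}B_n$ for $n\geq1$, and then set $X_1 := 1 - \sum_{n=2}^{\infty}X_n - \sum_{n=1}^{\infty}Y_n$ to absorb the remaining mass. All these are nonnegative: $X_n,Y_n\geq0$ for $n\geq2$ (resp.\ $n\geq1$) since $A_n,B_n\geq0$ and the weights are positive, and $X_1\geq0$ precisely by the inequality \eqref{eq2.3} of Theorem~\ref{th2.3}. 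By construction $\sum_{n=1}^{\infty}(X_n+Y_n)=1$. It then remains to verify that $\sum_{n=1}^{\infty}(X_nh_n+Y_ng_n)(z)$ reproduces $f(z)$: the $z^n$ coefficient ($n\geq2$) is $-X_n(1-\alpha)/(p_n-\alpha u_n) = -A_n$, the $\bar z^n$ coefficient is $Y_n(1-\alpha)/(q_n-(-1)^{j-i}\alpha v_n) = B_n$, and the leading term is $(X_1 + \sum_{n\geq2}X_n + \sum_{n\geq1}Y_n)z = z$, matching \eqref{eq1.2}. This is the same bookkeeping as in the sufficiency direction, read backwards.

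Finally, for the statement about extreme points: since $\mathcal{TH}(\Phi_i,\Psi_j;\alpha)$ is convex and closed (as remarked before the theorem) and every member is a convex combination of the $h_n$ and $g_n$, the extreme points must lie among $\{h_n\}\cup\{g_n\}$. To see that none of these is redundant, I would argue that if, say, $h_m = \lambda f_1 + (1-\lambda)f_2$ with $f_1,f_2\in\mathcal{TH}(\Phi_i,\Psi_j;\alpha)$ and $0<\lambda<1$, then comparing coefficients forces $f_1$ and $f_2$ to have the same single nonzero ``tail'' coefficient as $h_m$ (the coefficient extremal in Corollary~\ref{cor2.4}), hence $f_1=f_2=h_m$; similarly for $g_m$. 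This uses that the representation \eqref{eq2.4} with the mass concentrated at one index is the only way to realize $h_m$ or $g_m$.

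I expect the only mildly delicate point to be the justification that termwise manipulation of the series $\sum_{n}(X_nh_n+Y_ng_n)$ is legitimate — i.e.\ that the series converges locally uniformly in $\mathbb{D}$ so that coefficients may be collected. This follows because $|X_nh_n(z)|\le X_n(1+|z|^n/\cdot)\le$ a constant times $X_n$ uniformly on compact subsets (the weights $(p_n-\alpha u_n)/(1-\alpha)$ and $(q_n-(-1)^{j-i}\alpha v_n)/(1-\alpha)$ are bounded below, in fact by Remark~\ref{rem2.2}-type reasoning they grow, but boundedness below by a positive constant already suffices), and $\sum(X_n+Y_n)=1<\infty$. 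Everything else is the routine coefficient bookkeeping sketched above, essentially identical in both directions.
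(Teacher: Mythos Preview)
Your proposal is correct and follows essentially the same approach as the paper: both directions rely on Theorem~\ref{th2.3} via the identical coefficient bookkeeping, defining $X_n,Y_n$ from $A_n,B_n$ (and conversely), and letting $X_1$ absorb the residual mass. You add detail on local-uniform convergence of the series and on why each $h_n,g_n$ is actually extreme, both of which the paper simply asserts without argument.
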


\begin{proof}
Let $f=h+\bar{g}\in \mathcal{TH}(\Phi_{i},\Psi_{j};\alpha)$ where $h$ and $g$ are given by \eqref{eq1.2}. Setting
\[X_n=\frac{p_{n}-\alpha u_{n}}{1-\alpha}A_n\, (n=2,3,\ldots)\quad \mbox{and}\quad Y_n=\frac{q_{n}-(-1)^{j-i}\alpha v_{n}}{1-\alpha}B_n\, (n=1,2,\ldots),\]
we note that $0\leq X_n\leq1$ ($n=2,3,\ldots$) and $0\leq Y_n\leq 1$ ($n=1,2,\ldots$) by Corollary \ref{cor2.4}. We define
\[X_1=1-\sum_{n=2}^{\infty}X_n-\sum_{n=1}^{\infty}Y_n.\]
By Theorem \ref{th2.3}, $X_1\geq0$ and $f$ can be expressed in the form \eqref{eq2.4}.

Conversely, for functions of the form \eqref{eq2.4} we obtain
\[f(z)=z-\sum_{n=2}^{\infty}\mu_n z^n+\sum_{n=1}^{\infty}\nu_n\bar{z}^{n},\]
where $\mu_n=(1-\alpha)X_n/(p_n-\alpha u_n)$ ($n=2,3,\ldots$) and $\nu_n=(1-\alpha)Y_n/(q_{n}-(-1)^{j-i}\alpha v_{n})$ ($n=1,2,\ldots$). Since
\[\sum_{n=2}^{\infty}\frac{p_{n}-\alpha u_{n}}{1-\alpha}\mu_n+\sum_{n=1}^{\infty}\frac{q_{n}-(-1)^{j-i}\alpha v_{n}}{1-\alpha}\nu_n=\sum_{n=2}^{\infty}X_n+\sum_{n=1}^{\infty}Y_n=1-X_1\leq1,\]
it follows that $f \in \mathcal{TH}(\Phi_{i},\Psi_{j};\alpha)$ by Theorem \ref{th2.3}.
\end{proof}

For harmonic functions of the form
\begin{equation}\label{eq2.5}
f(z)=z-\sum_{n=2}^{\infty}A_nz^{n}+\sum_{n=1}^{\infty}B_n\bar{z}^n \quad \mbox{and}\quad F(z)=z-\sum_{n=2}^{\infty}A'_nz^{n}+\sum_{n=1}^{\infty}B'_n\bar{z}^n,
\end{equation}
where $A_n, B_n, A'_n, B'_n\geq 0$, we define the product $\hat{*}$ of $f$ and $F$ as
\[(f\hat{*}F)(z)=z-\sum_{n=2}^{\infty}A_n A'_n z^n+\sum_{n=1}^{\infty}B_n B'_n \bar{z}^{n}=(F\hat{*}f)(z), \quad z \in \mathbb{D}.\]

Suppose that $\mathcal{I}$ and $\mathcal{J}$ are subclasses of $\mathcal{TH}$. We say that a class $\mathcal{I}$ is closed under $\hat{*}$ if $f\hat{*}F \in \mathcal{I}$ for all $f$, $F \in \mathcal{I}$. Similarly, the class $\mathcal{I}$ is closed under $\hat{*}$ with members of $\mathcal{J}$ if $f\hat{*}F \in \mathcal{I}$ for all $f \in \mathcal{I}$ and $F \in \mathcal{J}$. In general, the class $\mathcal{TH}(\Phi_{i},\Psi_{j};\alpha)$ is not closed under the product $\hat{*}$. The analytic function $f(z)=z-2z^2$ $(z\in \mathbb{D})$ belongs to $\mathcal{TH}(z+z^2/2,z;0)$, but $(f\hat{*}f)(z)=z-4z^2 \not\in \mathcal{TH}(z+z^2/2,z;0)$. However, we shall show that the class $\mathcal{TH}(\Phi_{i},\Psi_{j};\alpha)$ is closed under $\hat{*}$ with certain members of $\mathcal{TH}$.

\begin{theorem}\label{th2.9}
Suppose that $f,F \in \mathcal{TH}$ are given by \eqref{eq2.5} with $A'_n\leq 1$ and $B'_n\leq 1$. If $f \in \mathcal{TH}(\Phi_{i},\Psi_{j};\alpha)$ then $f\hat{*} F \in \mathcal{TH}(\Phi_{i},\Psi_{j};\alpha)$.
\end{theorem}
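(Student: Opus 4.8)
The plan is to reduce everything to the coefficient characterization in Theorem \ref{th2.3}. Since $F\in\mathcal{TH}$, its coefficients $A'_n$ and $B'_n$ are nonnegative, so the product $f\,\hat{*}\,F$ has the shape \eqref{eq2.5} with coefficients $A_nA'_n\ge 0$ and $B_nB'_n\ge 0$; in particular $f\,\hat{*}\,F\in\mathcal{TH}$, and hence Theorem \ref{th2.3} is applicable to it. Thus it suffices to verify that $f\,\hat{*}\,F$ satisfies the inequality \eqref{eq2.3}, i.e.
\[\sum_{n=2}^{\infty}\frac{p_{n}-\alpha u_{n}}{1-\alpha}A_nA'_n+\sum_{n=1}^{\infty}\frac{q_{n}-(-1)^{j-i}\alpha v_{n}}{1-\alpha}B_nB'_n\le 1.\]

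The key step is a term-by-term domination. Because $f\in\mathcal{TH}(\Phi_{i},\Psi_{j};\alpha)$, Theorem \ref{th2.3} applied to $f$ gives
\[\sum_{n=2}^{\infty}\frac{p_{n}-\alpha u_{n}}{1-\alpha}A_n+\sum_{n=1}^{\infty}\frac{q_{n}-(-1)^{j-i}\alpha v_{n}}{1-\alpha}B_n\le 1.\]
Now I would use the hypotheses $0\le A'_n\le 1$ and $0\le B'_n\le 1$ together with the positivity of the weights: since $0\le\alpha<1$ and $p_n>u_n\ge 0$ we have $p_n-\alpha u_n\ge p_n-u_n>0$, and similarly $q_n-(-1)^{j-i}\alpha v_n>0$ (when $j-i$ is even this is $q_n-\alpha v_n\ge q_n-v_n>0$, and when $j-i$ is odd it is $q_n+\alpha v_n>0$). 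Hence each summand for $f\,\hat{*}\,F$ is bounded above by the corresponding summand for $f$, and summing yields the required bound $\le 1$. A final invocation of Theorem \ref{th2.3} then places $f\,\hat{*}\,F$ in $\mathcal{TH}(\Phi_{i},\Psi_{j};\alpha)$.

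There is no real obstacle here: the only point requiring a line of care is confirming that the coefficients $p_n-\alpha u_n$ and $q_n-(-1)^{j-i}\alpha v_n$ are strictly positive (so that the inequality $A_nA'_n\le A_n$, $B_nB'_n\le B_n$ is preserved after multiplying by these weights), which is exactly the case distinction on the parity of $j-i$ indicated above. Everything else is the direct application of the already-established necessary-and-sufficient condition of Theorem \ref{th2.3}.
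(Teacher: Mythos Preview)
Your proof is correct and follows exactly the same route as the paper: reduce to the coefficient characterization of Theorem~\ref{th2.3}, then use $0\le A'_n,B'_n\le 1$ together with the nonnegativity of the weights to dominate the sum for $f\,\hat{*}\,F$ term by term by the sum for $f$. The paper's proof omits the explicit verification that the weights are positive and that $f\,\hat{*}\,F\in\mathcal{TH}$, but your added details are all correct.
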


\begin{proof}
In view of Theorem \ref{th2.3}, it suffices to show that the coefficients of $f\hat{*}F$ satisfy condition \eqref{eq2.3}. Since
\begin{align*}
\sum_{n=2}^{\infty}\frac{p_{n}-\alpha u_{n}}{1-\alpha}A_n A'_n&+\sum_{n=1}^{\infty}\frac{q_{n}-(-1)^{j-i}\alpha v_{n}}{1-\alpha}B_n B'_n\\
&\leq \sum_{n=2}^{\infty}\frac{p_{n}-\alpha u_{n}}{1-\alpha}A_n+\sum_{n=1}^{\infty}\frac{q_{n}-(-1)^{j-i}\alpha v_{n}}{1-\alpha}B_n\leq 1,
\end{align*}
the result follows immediately.
\end{proof}

By imposing some restrictions on the coefficients of $\Phi_i$, it is possible for the class $\mathcal{TH}(\Phi_{i},\Psi_{j};\alpha)$ to be closed under the product $\hat{*}$, as seen by the following corollary.

\begin{corollary}\label{cor2.10}
If $f,F \in \mathcal{TH}(\Phi_{i},\Psi_{j};\alpha)$ with $p_n\geq 1$ $(n=2,3,\ldots)$ and $q_n \geq 1$ $(n=1,2,\ldots)$ then $f\hat{*}F \in \mathcal{TH}(\Phi_{i},\Psi_{j};\alpha)$.
\end{corollary}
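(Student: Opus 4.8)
The plan is to reduce the corollary directly to Theorem~\ref{th2.9}. Recall that Theorem~\ref{th2.9} guarantees $f\hat{*}F\in\mathcal{TH}(\Phi_i,\Psi_j;\alpha)$ whenever $f\in\mathcal{TH}(\Phi_i,\Psi_j;\alpha)$ and the coefficients $A'_n,B'_n$ of $F$ (written as in \eqref{eq2.5}) satisfy $A'_n\le 1$ and $B'_n\le 1$. Since here \emph{both} $f$ and $F$ are assumed to lie in $\mathcal{TH}(\Phi_i,\Psi_j;\alpha)$, it suffices to show that, under the extra hypotheses $p_n\ge 1$ $(n\ge 2)$ and $q_n\ge 1$ $(n\ge 1)$, every coefficient of a member of the class is automatically at most $1$; applying this observation to $F$ and then quoting Theorem~\ref{th2.9} finishes the proof.

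First I would invoke Corollary~\ref{cor2.4}, which for $F=h+\bar g\in\mathcal{TH}(\Phi_i,\Psi_j;\alpha)$ with $h,g$ given by \eqref{eq1.2} yields the sharp bounds
\[A'_n\le\frac{1-\alpha}{p_n-\alpha u_n}\quad(n\ge 2),\qquad B'_n\le\frac{1-\alpha}{q_n-(-1)^{j-i}\alpha v_n}\quad(n\ge 1).\]
Thus it is enough to verify $p_n-\alpha u_n\ge 1-\alpha$ and $q_n-(-1)^{j-i}\alpha v_n\ge 1-\alpha$, since then the fractions on the right are $\le 1$.

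For the analytic coefficients: using $0\le u_n<p_n$ and $\alpha\ge 0$ we get $\alpha u_n\le\alpha p_n$, hence $p_n-\alpha u_n\ge p_n-\alpha p_n=(1-\alpha)p_n\ge 1-\alpha$, the last step because $p_n\ge 1$ and $1-\alpha>0$. For the co-analytic coefficients one splits on the sign of $(-1)^{j-i}$: if $(-1)^{j-i}=1$, the same computation with $0\le v_n<q_n$ gives $q_n-\alpha v_n\ge(1-\alpha)q_n\ge 1-\alpha$; if $(-1)^{j-i}=-1$, then $q_n+\alpha v_n\ge q_n\ge 1\ge 1-\alpha$. In either case the required inequality holds, so $A'_n\le 1$ $(n\ge 2)$ and $B'_n\le 1$ $(n\ge 1)$.

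With $F$ satisfying these coefficient bounds and $f\in\mathcal{TH}(\Phi_i,\Psi_j;\alpha)$, Theorem~\ref{th2.9} applies verbatim and gives $f\hat{*}F\in\mathcal{TH}(\Phi_i,\Psi_j;\alpha)$. I do not anticipate any real obstacle here: the only point needing a moment's care is the case distinction on the sign $(-1)^{j-i}$ in the co-analytic bound, and even that reduces to the elementary fact that $(1-\alpha)t\ge 1-\alpha$ for $t\ge 1$. One may also note, symmetrically, that the same argument applied to $f$ shows its coefficients are $\le 1$ as well, so the roles of $f$ and $F$ in the appeal to Theorem~\ref{th2.9} are interchangeable.
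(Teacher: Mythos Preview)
Your proof is correct and follows the paper's own approach exactly: it invokes Corollary~\ref{cor2.4} to show $A'_n\le 1$ and $B'_n\le 1$, and then applies Theorem~\ref{th2.9}. The paper merely states that these coefficient bounds follow ``using the hypothesis and Corollary~\ref{cor2.4}'' without writing out the verification of $p_n-\alpha u_n\ge 1-\alpha$ and $q_n-(-1)^{j-i}\alpha v_n\ge 1-\alpha$; your explicit case analysis supplies those easy details.
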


\begin{proof}
Suppose that $f$ and $F$ are given by \eqref{eq2.5}. Using the hypothesis and Corollary \ref{cor2.4} it is easy to see that $A'_n\leq 1$ $(n=2,3,\ldots)$ and $B'_n\leq 1$ $(n=1,2,\ldots)$. The result now follows by Theorem \ref{th2.9}.
\end{proof}

In view of Corollary \ref{cor2.10}, it follows that the classes $\mathcal{TS}_H^*(\alpha)$ and $\mathcal{TK}_H(\alpha)$ are closed under the product $\hat{*}$. The next corollary shows that the class $\mathcal{TH}(\Phi_{i},\Psi_{j};\alpha)$ is preserved under certain integral transforms.

\begin{corollary}\label{cor2.11}
If $f \in \mathcal{TH}(\Phi_{i},\Psi_{j};\alpha)$ then $L_{\gamma}[f]$ and $G_\delta[f]$ belong to $\mathcal{TH}(\Phi_{i},\Psi_{j};\alpha)$, where $L_\gamma$ and $G_\delta$ are integral transforms defined by \eqref{eq1.5} and \eqref{eq1.6} respectively.
\end{corollary}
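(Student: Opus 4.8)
The plan is to reduce the statement to a coefficient check via Theorem \ref{th2.3}, exactly as in the proof of Theorem \ref{th2.9}. Write $f=h+\bar g$ with $h,g$ as in \eqref{eq1.2}, so $f(z)=z-\sum_{n=2}^\infty A_n z^n+\sum_{n=1}^\infty B_n\bar z^n$ with $A_n,B_n\ge 0$. First I would compute the Taylor/Fourier coefficients of $L_\gamma[f]$ and $G_\delta[f]$ from the definitions \eqref{eq1.5} and \eqref{eq1.6}. Since $\int_0^z t^{\gamma-1}t^n\,dt=z^{\gamma+n}/(\gamma+n)$, one gets $L_\gamma[f](z)=z-\sum_{n=2}^\infty\frac{\gamma+1}{\gamma+n}A_n z^n+\sum_{n=1}^\infty\frac{\gamma+1}{\gamma+n}B_n\bar z^n$; note $\gamma>-1$ forces $\gamma+n>0$ for all $n\ge 1$, so all coefficients stay nonnegative and $L_\gamma[f]\in\mathcal{TH}$. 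Similarly, using $\int_0^z\frac{t^n-(\delta t)^n}{(1-\delta)t}\,dt=\frac{1-\delta^n}{1-\delta}\cdot\frac{z^n}{n}$, one finds $G_\delta[f](z)=z-\sum_{n=2}^\infty c_n A_n z^n+\sum_{n=1}^\infty c_n B_n\bar z^n$ with $c_n=\frac{1-\delta^n}{n(1-\delta)}=\frac{1}{n}(1+\delta+\cdots+\delta^{n-1})$; for $-1\le\delta<1$ one checks $0\le c_n\le 1$ (the bound $c_n\le 1$ because the average of $1,\delta,\dots,\delta^{n-1}$ is at most $1$, and $c_n\ge 0$ since $1-\delta^n\ge 0$ when $-1\le\delta<1$), so again $G_\delta[f]\in\mathcal{TH}$.

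The key observation is that in both cases the new coefficients are obtained from $A_n,B_n$ by multiplying by factors lying in $[0,1]$: for $L_\gamma$ the factor is $\frac{\gamma+1}{\gamma+n}$, which is $1$ when $n=1$ and is $\le 1$ for $n\ge 2$ since $\gamma+n\ge\gamma+1>0$; for $G_\delta$ the factor is $c_n\in[0,1]$ as just noted. Therefore
\[
\sum_{n=2}^{\infty}\frac{p_{n}-\alpha u_{n}}{1-\alpha}\Big(\tfrac{\gamma+1}{\gamma+n}A_n\Big)+\sum_{n=1}^{\infty}\frac{q_{n}-(-1)^{j-i}\alpha v_{n}}{1-\alpha}\Big(\tfrac{\gamma+1}{\gamma+n}B_n\Big)\le\sum_{n=2}^{\infty}\frac{p_{n}-\alpha u_{n}}{1-\alpha}A_n+\sum_{n=1}^{\infty}\frac{q_{n}-(-1)^{j-i}\alpha v_{n}}{1-\alpha}B_n\le 1,
\]
the last inequality by Theorem \ref{th2.3} applied to $f\in\mathcal{TH}(\Phi_i,\Psi_j;\alpha)$, and the coefficients of $p_n-\alpha u_n$ and $q_n-(-1)^{j-i}\alpha v_n$ are nonnegative so multiplying termwise by numbers in $[0,1]$ only decreases the sum. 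Hence $L_\gamma[f]$ satisfies \eqref{eq2.3}, and the identical argument with $c_n$ in place of $\frac{\gamma+1}{\gamma+n}$ handles $G_\delta[f]$. By Theorem \ref{th2.3}, both transforms lie in $\mathcal{TH}(\Phi_i,\Psi_j;\alpha)$.

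Alternatively, and perhaps more slickly, I would phrase this as an instance of Theorem \ref{th2.9}: writing $\Lambda_\gamma(z)=z-\sum_{n=2}^\infty\frac{\gamma+1}{\gamma+n}z^n+\sum_{n=1}^\infty\frac{\gamma+1}{\gamma+n}\bar z^n$ and $\Delta_\delta(z)=z-\sum_{n=2}^\infty c_n z^n+\sum_{n=1}^\infty c_n\bar z^n$, one has $L_\gamma[f]=f\,\hat *\,\Lambda_\gamma$ and $G_\delta[f]=f\,\hat *\,\Delta_\delta$, and since the coefficients of $\Lambda_\gamma$ and $\Delta_\delta$ all lie in $[0,1]$, Theorem \ref{th2.9} applies directly. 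Either way, there is no real obstacle here: the only points requiring care are (i) verifying the two coefficient factors indeed lie in $[0,1]$ under the stated ranges $\gamma>-1$ and $-1\le\delta<1$ — in particular checking the nonnegativity of $1-\delta^n$ for negative $\delta$, which holds since $|\delta|<1$ — and (ii) noting that the sense in which these "coefficient multipliers" interact with \eqref{eq2.3} is monotone precisely because $p_n-\alpha u_n>0$ and $q_n-(-1)^{j-i}\alpha v_n>0$. I expect the write-up to be short, essentially a two-line computation of the coefficients followed by an appeal to Theorems \ref{th2.3} and \ref{th2.9}.
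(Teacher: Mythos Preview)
Your proposal is correct and matches the paper's proof: the paper writes $L_\gamma[f]=f\,\hat{*}\,\Lambda_\gamma$ and $G_\delta[f]=f\,\hat{*}\,\Delta_\delta$ exactly as in your ``alternative'' paragraph and then invokes Theorem~\ref{th2.9}. Your first, unpacked argument via Theorem~\ref{th2.3} is just the content of Theorem~\ref{th2.9} spelled out, so the two routes coincide; the only tiny imprecision is that $|\delta|<1$ excludes the permitted endpoint $\delta=-1$, but there $c_n=(1-(-1)^n)/(2n)\in\{0,1/n\}\subset[0,1]$ anyway.
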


\begin{proof}
From the representations of $L_\gamma[f]$ and $G_\delta[f]$, it is easy to deduce that
\[L_\gamma[f](z)=f(z)\hat{*}\left(z-\sum_{n=2}^{\infty}\frac{\gamma+1}{\gamma+n} z^n+\sum_{n=1}^{\infty}\frac{\gamma+1}{\gamma+n}\bar{z}^{n}\right),\]
and
\[G_\delta[f](z)=f(z)\hat{*}\left(z-\sum_{n=2}^{\infty}\frac{1-\delta^n}{1-\delta}\frac{ z^n}{n}+\sum_{n=1}^{\infty}\frac{1-\delta^n}{1-\delta}\frac{\overline{z}^{n}}{n}\right),\]
where $z \in \mathbb{D}$. The proof of the corollary now follows by invoking Theorem \ref{th2.9}.
\end{proof}

The next two theorems provide sufficient conditions for the product $\hat{*}$ of $f \in \mathcal{TH}(\Phi_{i},\Psi_{j};\alpha)$ with certain members of $\mathcal{TH}$ associated with hypergeometric functions to be in the class $\mathcal{TH}(\Phi_{i},\Psi_{j};\alpha)$.
\begin{theorem}\label{th-hyper1}
Let $f \in \mathcal{TH}(\Phi_{i},\Psi_{j};\alpha)$ and $\Phi(z)=2z-\phi_1(z)+\overline{\phi_2(z)}$; $\phi_1$ and $\phi_2$ being given by \eqref{hyper1}. If $a_k, b_k>0$, $c_k>a_k+b_k$ for $k=1,2$ and if
\[F(a_1,b_1,c_1;1)+F(a_2,b_2,c_2;1)\leq 3,\]
then $f \hat{*}\Phi \in \mathcal{TH}(\Phi_{i},\Psi_{j};\alpha)$.
\end{theorem}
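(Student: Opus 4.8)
The plan is to put $\Phi$ into the normalized form \eqref{eq2.5}, verify that all of its coefficients are bounded by $1$, and then invoke Theorem \ref{th2.9} with $\Phi$ in the role of $F$.

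First I would expand the hypergeometric functions via \eqref{eq1.4}. Since $\phi_1(z)=zF(a_1,b_1,c_1;z)=z+\sum_{n=2}^{\infty}\frac{(a_1)_{n-1}(b_1)_{n-1}}{(c_1)_{n-1}(1)_{n-1}}z^n$ and $\phi_2(z)=F(a_2,b_2,c_2;z)-1=\sum_{n=1}^{\infty}\frac{(a_2)_n(b_2)_n}{(c_2)_n(1)_n}z^n$, we obtain
\[\Phi(z)=2z-\phi_1(z)+\overline{\phi_2(z)}=z-\sum_{n=2}^{\infty}A'_n z^n+\sum_{n=1}^{\infty}B'_n\overline{z}^n,\]
where $A'_n=(a_1)_{n-1}(b_1)_{n-1}/\bigl((c_1)_{n-1}(1)_{n-1}\bigr)\geq 0$ for $n\geq 2$ and $B'_n=(a_2)_n(b_2)_n/\bigl((c_2)_n(1)_n\bigr)\geq 0$ for $n\geq 1$; all Pochhammer symbols involved are positive because $a_k,b_k,c_k>0$. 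The series defining $\phi_1$ and $\phi_2$ converge in $\mathbb{D}$, so $\Phi$ is harmonic in $\mathbb{D}$, is suitably normalized, and has the coefficient sign pattern of \eqref{eq1.2}; hence $\Phi\in\mathcal{TH}$ and is of the form \eqref{eq2.5}.

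Next I would sum the coefficients. Reindexing, $\sum_{n=2}^{\infty}A'_n=\sum_{m=1}^{\infty}\frac{(a_1)_m(b_1)_m}{(c_1)_m(1)_m}=F(a_1,b_1,c_1;1)-1$, the series converging at $z=1$ because $c_1>a_1+b_1$ (so $\RE(c_1-a_1-b_1)>0$, and by Gauss's summation theorem its value is $\Gamma(c_1)\Gamma(c_1-a_1-b_1)/\bigl(\Gamma(c_1-a_1)\Gamma(c_1-b_1)\bigr)$); similarly $\sum_{n=1}^{\infty}B'_n=F(a_2,b_2,c_2;1)-1$. Adding these and using the hypothesis $F(a_1,b_1,c_1;1)+F(a_2,b_2,c_2;1)\leq 3$ yields
\[\sum_{n=2}^{\infty}A'_n+\sum_{n=1}^{\infty}B'_n\leq 1.\]
Since every summand is non-negative, each individual coefficient is dominated by the whole sum, so $A'_n\leq 1$ for all $n\geq 2$ and $B'_n\leq 1$ for all $n\geq 1$.

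Thus $\Phi$ meets all the hypotheses imposed on $F$ in Theorem \ref{th2.9}, and since $f\in\mathcal{TH}(\Phi_{i},\Psi_{j};\alpha)$ by assumption, that theorem gives at once $f\,\hat{*}\,\Phi\in\mathcal{TH}(\Phi_{i},\Psi_{j};\alpha)$. The only genuine work is the bookkeeping in the third step — correctly reindexing the Pochhammer sums so as to identify $\sum A'_n$ and $\sum B'_n$ with $F(\,\cdot\,;1)-1$, and recording convergence at $z=1$ so that the hypothesis is meaningful; after that the term-by-term bounds $A'_n,B'_n\leq 1$ and the appeal to Theorem \ref{th2.9} are immediate, and no delicate estimate is needed.
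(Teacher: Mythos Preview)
Your proof is correct. The route differs slightly from the paper's: you bound the hypergeometric coefficients by showing $\sum_{n\geq 2}A'_n+\sum_{n\geq 1}B'_n\leq 1$ and hence $A'_n,B'_n\leq 1$, then apply Theorem~\ref{th2.9} as a black box. The paper instead writes out $f\hat{*}\Phi$, applies Corollary~\ref{cor2.4} to bound the \emph{other} factor, namely $\dfrac{p_n-\alpha u_n}{1-\alpha}A_n\leq 1$ and $\dfrac{q_n-(-1)^{j-i}\alpha v_n}{1-\alpha}B_n\leq 1$, which reduces the coefficient sum of Theorem~\ref{th2.3} directly to $\sum A'_n+\sum B'_n=F(a_1,b_1,c_1;1)+F(a_2,b_2,c_2;1)-2\leq 1$. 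Both arguments hinge on the same identity for the hypergeometric sums; the only difference is which of the two factors in the product $A_nA'_n$ is estimated by $1$. Your packaging via Theorem~\ref{th2.9} is arguably tidier, while the paper's version makes the role of the hypothesis $F(\cdot;1)+F(\cdot;1)\leq 3$ visible in a single line without passing through the intermediate bound $A'_n\leq 1$.
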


\begin{proof}
Writing $f=h+\overline{g}$ where $h$ and $g$ are given by \eqref{eq1.2}, note that
\[(f \hat{*}\Phi)(z)=z-\sum_{n=2}^{\infty} \frac{(a_1)_{n-1}(b_1)_{n-1}}{(c_1)_{n-1}(1)_{n-1}} A_n z^n+\sum_{n=1}^{\infty}\frac{(a_2)_{n}(b_2)_{n}}{(c_2)_{n}(1)_{n}} B_n \overline{z}^n,\quad z\in \mathbb{D}.\]
Applying Corollary \ref{cor2.4}, we deduce that
\begin{align*}
\sum_{n=2}^{\infty}\frac{p_{n}-\alpha u_{n}}{1-\alpha}\frac{(a_1)_{n-1}(b_1)_{n-1}}{(c_1)_{n-1}(1)_{n-1}} A_n &+\sum_{n=1}^{\infty}\frac{q_{n}-(-1)^{j-i}\alpha v_{n}}{1-\alpha}\frac{(a_2)_{n}(b_2)_{n}}{(c_2)_{n}(1)_{n}} B_n \\
&\leq \sum_{n=2}^{\infty}\frac{(a_1)_{n-1}(b_1)_{n-1}}{(c_1)_{n-1}(1)_{n-1}}+\sum_{n=1}^{\infty}\frac{(a_2)_{n}(b_2)_{n}}{(c_2)_{n}(1)_{n}}\\
&=F(a_1,b_1,c_1;1)+F(a_2,b_2,c_2;1)-2\leq 1.
\end{align*}
Theorem \ref{th2.3} now gives the desired result.
\end{proof}

\begin{theorem}\label{th-hyper2}
Let $f \in \mathcal{TH}(\Phi_{i},\Psi_{j};\alpha)$, $a_k, b_k>0$ and $c_k>a_k+b_k$ for $k=1,2$. Furthermore, if
\[F(a_1,b_1,c_1;1)+F(a_2,b_2,c_2;1)\leq 4,\]
then $f \hat{*}\Psi \in \mathcal{TH}(\Phi_{i},\Psi_{j};\alpha)$ where $\Psi(z)=2z-\psi_1(z)+\overline{\psi_2(z)}$; $\psi_1$ and $\psi_2$ being given by \eqref{hyper2}.
\end{theorem}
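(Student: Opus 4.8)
The plan is to run the same argument as in the proof of Theorem~\ref{th-hyper1}: write $f\hat{*}\Psi$ down explicitly and then verify the coefficient criterion of Theorem~\ref{th2.3}. First I would integrate the series \eqref{eq1.4} term by term to obtain
\[\psi_1(z)=z+\sum_{n=2}^{\infty}\frac{(a_1)_{n-1}(b_1)_{n-1}}{(c_1)_{n-1}(1)_{n}}z^n,\qquad \psi_2(z)=\sum_{n=2}^{\infty}\frac{(a_2)_{n-1}(b_2)_{n-1}}{(c_2)_{n-1}(1)_{n}}z^n,\]
so that $\Psi=2z-\psi_1+\overline{\psi_2}$ is of the form \eqref{eq2.5} with analytic coefficients $A'_n=(a_1)_{n-1}(b_1)_{n-1}/[(c_1)_{n-1}(1)_{n}]$ $(n\ge2)$ and co-analytic coefficients $B'_1=0$, $B'_n=(a_2)_{n-1}(b_2)_{n-1}/[(c_2)_{n-1}(1)_{n}]$ $(n\ge2)$. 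Writing $f=h+\bar g$ with $h,g$ as in \eqref{eq1.2}, the definition of $\hat{*}$ then gives
\[(f\hat{*}\Psi)(z)=z-\sum_{n=2}^{\infty}A_nA'_n\,z^n+\sum_{n=2}^{\infty}B_nB'_n\,\bar z^n,\]
the $n=1$ co-analytic term dropping out because $B'_1=0$.

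By Theorem~\ref{th2.3} it then suffices to prove that
\[\sum_{n=2}^{\infty}\frac{p_{n}-\alpha u_{n}}{1-\alpha}A_nA'_n+\sum_{n=2}^{\infty}\frac{q_{n}-(-1)^{j-i}\alpha v_{n}}{1-\alpha}B_nB'_n\le1.\]
Since $f\in\mathcal{TH}(\Phi_i,\Psi_j;\alpha)$, Corollary~\ref{cor2.4} gives $\frac{p_n-\alpha u_n}{1-\alpha}A_n\le1$ and $\frac{q_n-(-1)^{j-i}\alpha v_n}{1-\alpha}B_n\le1$, so the left-hand side is at most $\sum_{n\ge2}A'_n+\sum_{n\ge2}B'_n$. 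Here is the only real idea, and it is exactly what permits the threshold $4$ in place of the $3$ of Theorem~\ref{th-hyper1}: integration has inserted a factor $1/n$ into each coefficient, since $(1)_n=n\,(1)_{n-1}$, so for $n\ge2$
\[A'_n=\frac1n\cdot\frac{(a_1)_{n-1}(b_1)_{n-1}}{(c_1)_{n-1}(1)_{n-1}}\le\frac12\cdot\frac{(a_1)_{n-1}(b_1)_{n-1}}{(c_1)_{n-1}(1)_{n-1}},\]
and likewise for $B'_n$. Reindexing by $m=n-1$ and using $c_k>a_k+b_k$ (which, by Gauss's summation theorem, makes the two series below converge at $1$),
\begin{align*}
\sum_{n=2}^{\infty}A'_n+\sum_{n=2}^{\infty}B'_n
&\le\frac12\sum_{m=1}^{\infty}\frac{(a_1)_m(b_1)_m}{(c_1)_m(1)_m}+\frac12\sum_{m=1}^{\infty}\frac{(a_2)_m(b_2)_m}{(c_2)_m(1)_m}\\
&=\frac{F(a_1,b_1,c_1;1)+F(a_2,b_2,c_2;1)}{2}-1\le\frac{4}{2}-1=1,
\end{align*}
and an appeal to Theorem~\ref{th2.3} completes the proof.

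I do not expect a genuine obstacle: the whole argument is the termwise estimate of Theorem~\ref{th-hyper1} together with one extra observation. The points needing care are purely bookkeeping --- getting the Pochhammer shifts in $\psi_1,\psi_2$ right and noting that the co-analytic part of $\Psi$ begins at $n=2$, so that $\hat{*}$ annihilates the $B_1$-coefficient of $f$. The single non-routine step is recognizing that the integral transform multiplies the $n$th coefficient by $1/n\le 1/2$, which is precisely the slack that upgrades the hypergeometric hypothesis from $3$ to $4$.
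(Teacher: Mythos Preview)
Your proof is correct and follows essentially the same route as the paper's: both compute $f\hat{*}\Psi$ explicitly, apply Corollary~\ref{cor2.4} to reduce to $\sum A'_n+\sum B'_n$, exploit the factor $1/n\le 1/2$ coming from the extra $n$ in $(1)_n$, and then identify the resulting sums with $F(a_k,b_k,c_k;1)-1$ via Gauss's theorem before invoking Theorem~\ref{th2.3}. Your write-up is in fact a bit more explicit than the paper's about why $B'_1=0$ and where the $1/n$ arises, but there is no substantive difference in the argument.
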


\begin{proof}
For $f=h+\overline{g}$ where $h$ and $g$ are given by \eqref{eq1.2}, we have
\[(f \hat{*}\Psi)(z)=z-\sum_{n=2}^{\infty} \frac{(a_1)_{n-1}(b_1)_{n-1}}{(c_1)_{n-1}(1)_{n}} A_n z^n+\sum_{n=2}^{\infty}\frac{(a_2)_{n-1}(b_2)_{n-1}}{(c_2)_{n-1}(1)_{n}} B_n \overline{z}^n,\quad z\in \mathbb{D}.\]
A simple calculation shows that
\begin{align*}
\sum_{n=2}^{\infty}\frac{p_{n}-\alpha u_{n}}{1-\alpha}\frac{(a_1)_{n-1}(b_1)_{n-1}}{(c_1)_{n-1}(1)_{n}} A_n  &+\sum_{n=2}^{\infty}\frac{q_{n}-(-1)^{j-i}\alpha v_{n}}{1-\alpha}\frac{(a_2)_{n-1}(b_2)_{n-1}}{(c_2)_{n-1}(1)_{n}} B_n \\
&\leq \sum_{n=2}^{\infty}\frac{(a_1)_{n-1}(b_1)_{n-1}}{(c_1)_{n-1}(1)_{n}} +\sum_{n=2}^{\infty}\frac{(a_2)_{n-1}(b_2)_{n-1}}{(c_2)_{n-1}(1)_{n}} \\
&\leq \frac{1}{2}\sum_{n=2}^{\infty}n\frac{(a_1)_{n-1}(b_1)_{n-1}}{(c_1)_{n-1}(1)_{n}} +\frac{1}{2}\sum_{n=2}^{\infty}n\frac{(a_2)_{n-1}(b_2)_{n-1}}{(c_2)_{n-1}(1)_{n}} \\
&=\frac{1}{2}(F(a_1,b_1,c_1;1)+F(a_2,b_2,c_2;1))-1\leq 1,
\end{align*}
using the hypothesis and Corollary \ref{cor2.4}. Hence $f \hat{*}\Psi \in \mathcal{TH}(\Phi_{i},\Psi_{j};\alpha)$ by Theorem \ref{th2.3}.
\end{proof}

We close this section by determining the convex combination properties of the members of the class $\mathcal{TH}(\Phi_{i},\Psi_{j};\alpha)$.

\begin{theorem}\label{th2.12}
The class $\mathcal{TH}(\Phi_{i},\Psi_{j};\alpha)$ is closed under convex combinations.
\end{theorem}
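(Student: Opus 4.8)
The plan is to reduce the statement to the coefficient criterion of Theorem \ref{th2.3}, which characterizes membership in $\mathcal{TH}(\Phi_{i},\Psi_{j};\alpha)$ purely in terms of the moduli (here, the nonnegative values) of the Taylor--Fourier coefficients. Since Theorem \ref{th2.3} reduces the class to a convex constraint on coefficient sequences, closedness under convex combinations should follow almost mechanically; the only care needed is to verify that a convex combination of functions of the form \eqref{eq1.2} is again of that form, so that Theorem \ref{th2.3} applies to it.

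First I would take functions $f_k \in \mathcal{TH}(\Phi_{i},\Psi_{j};\alpha)$ for $k=1,2,\ldots$, written as
\[
f_k(z)=z-\sum_{n=2}^{\infty}A_{n,k}z^n+\sum_{n=1}^{\infty}B_{n,k}\bar z^n,\qquad A_{n,k},B_{n,k}\ge 0,
\]
and scalars $t_k\ge 0$ with $\sum_k t_k=1$. I would then form $f=\sum_k t_k f_k$ and observe that, since each $A_{n,k}$ and $B_{n,k}$ is nonnegative and the $t_k$ are nonnegative, the coefficients of $f$ are $A_n=\sum_k t_k A_{n,k}\ge 0$ and $B_n=\sum_k t_k B_{n,k}\ge 0$; also $f(0)=0$ and $f_z(0)=1$, so $f$ has the required form \eqref{eq1.2} and lies in $\mathcal{TH}$. (If one wants to be careful about convergence, one notes that by Corollary \ref{cor2.4} the coefficients of each $f_k$ are uniformly bounded, so the series defining $f$ converges locally uniformly in $\mathbb{D}$ and $f$ is harmonic.)

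Next I would verify the coefficient inequality \eqref{eq2.3} for $f$. By Theorem \ref{th2.3}, each $f_k$ satisfies
\[
\sum_{n=2}^{\infty}\frac{p_n-\alpha u_n}{1-\alpha}A_{n,k}+\sum_{n=1}^{\infty}\frac{q_n-(-1)^{j-i}\alpha v_n}{1-\alpha}B_{n,k}\le 1.
\]
Multiplying by $t_k$, summing over $k$, and interchanging the order of summation (legitimate since all terms are nonnegative, by Tonelli's theorem) gives
\[
\sum_{n=2}^{\infty}\frac{p_n-\alpha u_n}{1-\alpha}A_{n}+\sum_{n=1}^{\infty}\frac{q_n-(-1)^{j-i}\alpha v_n}{1-\alpha}B_{n}\le \sum_{k}t_k=1.
\]
Hence $f$ satisfies \eqref{eq2.3}, and Theorem \ref{th2.3} yields $f\in\mathcal{TH}(\Phi_{i},\Psi_{j};\alpha)$, completing the proof.

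There is no real obstacle here: the whole argument is an exercise in interchanging nonnegative sums, and the ``hard part'' — establishing the coefficient characterization — has already been done in Theorem \ref{th2.3}. The only point that deserves a sentence of attention is the bookkeeping that a (possibly infinite) convex combination stays inside the restricted family $\mathcal{TH}$ with nonnegative coefficients and that the rearrangement of the double series is justified by nonnegativity; one could equally well state the theorem for finite convex combinations, in which case even that point is trivial.
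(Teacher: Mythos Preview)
Your proposal is correct and follows essentially the same approach as the paper: both reduce to the coefficient characterization of Theorem~\ref{th2.3}, form the convex combination, and verify inequality~\eqref{eq2.3} by multiplying each $f_k$'s inequality by $t_k$, summing, and interchanging the order of summation. Your version is slightly more careful (noting that the combination stays in $\mathcal{TH}$ and justifying the interchange via nonnegativity), but the argument is the same.
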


\begin{proof}
For $k=1,2,\ldots$ suppose that $f_{k}\in \mathcal{TH}(\Phi_{i},\Psi_{j};\alpha)$ where
\[f_k(z)=z-\sum_{n=2}^{\infty}a_{n,k}z^n+\sum_{n=1}^{\infty}b_{n,k}\bar{z}^n,\quad z\in \mathbb{D}.\]
For $\sum_{k=1}^{\infty}t_k=1$, $0\leq t_k\leq 1$, the convex combination of $f_k$'s may be written as
\[f(z)=\sum_{k=1}^{\infty}t_{k}f_{k}(z)=z-\sum_{n=2}^{\infty}\gamma_n z^n+\sum_{n=1}^{\infty}\delta_n\bar{z}^{n},\quad z\in \mathbb{D}\]
where $\gamma_n=\sum_{k=1}^{\infty}t_k a_{n,k}$ $(n=2,3,\ldots)$ and $\delta_n=\sum_{k=1}^{\infty}t_k b_{n,k}$ $(n=1,2,\ldots)$. Since
\begin{align*}
\sum_{n=2}^{\infty}\frac{p_{n}-\alpha u_{n}}{1-\alpha}\gamma_n&+\sum_{n=1}^{\infty}\frac{q_{n}-(-1)^{j-i}\alpha v_{n}}{1-\alpha}\delta_n\\
&=\sum_{k=1}^{\infty} t_k\left(\sum_{n=2}^{\infty}\frac{p_{n}-\alpha u_{n}}{1-\alpha}a_{n,k}+\sum_{n=1}^{\infty}\frac{q_{n}-(-1)^{j-i}\alpha v_{n}}{1-\alpha}b_{n,k}\right)\\
&\leq \sum_{k=1}^{\infty} t_k=1,
\end{align*}
it follows that $f \in \mathcal{TH}(\Phi_{i},\Psi_{j};\alpha)$ by Theorem \ref{th2.3}.
\end{proof}

\section{A particular case}\label{sec3}
For $0\leq \alpha<1$, set $\mathcal{U}_H(\alpha):=\mathcal{H}(z/(1-z)+\bar{z}/(1-\bar{z}),z;\alpha)$. Then $\mathcal{U}_{H}(\alpha)$ denote the set of all harmonic functions $f \in \mathcal{H}$ that satisfy $\RE f(z)/z>\alpha$ for $z \in \mathbb{D}$ and let $\mathcal{TU}_{H}(\alpha):=\mathcal{U}_{H}(\alpha)\cap \mathcal{TH}$. Applying Theorem \ref{th2.3} with $p_n-1=0=u_n$ $(n=2,3,\ldots)$, $q_n-1=0=v_n$ $(n=1,2,\ldots)$ and using the results of Section \ref{sec2}, we obtain

\begin{theorem}\label{th3.1}
Let the function $f=h+\bar{g}$ be such that $h$ and $g$ are given by \eqref{eq1.2} and $0\leq \alpha<1$. Then $f \in \mathcal{TU}_{H}(\alpha)$ if and only if
\[\sum_{n=2}^{\infty}\frac{A_n}{1-\alpha}+\sum_{n=1}^{\infty}\frac{B_n}{1-\alpha}\leq 1.\]
Furthermore, if $f \in \mathcal{TU}_{H}(\alpha)$ then $A_n\leq 1-\alpha$ $(n=2,3,\ldots)$, $B_n\leq 1-\alpha$ $(n=1,2,\ldots)$ and
\begin{equation}\label{eq3.1}
(1-B_1)|z|-(1-\alpha-B_1)|z|^2\leq|f(z)|\leq(1+B_1)|z|+(1-\alpha-B_1)|z|^2
\end{equation}
for all $z \in \mathbb{D}$. In particular, the range $f(\mathbb{D})$ contains the disk $|w|<\alpha$.

Moreover, the extreme points of the class $\mathcal{TU}_{H}(\alpha)$ are $\{h_n\}$ and $\{g_n\}$ where $h_1(z)=z$, $h_n(z)=z-(1-\alpha)z^n$ $(n=2,3,\ldots)$ and $g_n(z)=z+(1-\alpha)\bar{z}^{n}$ $(n=1,2,\ldots)$.
\end{theorem}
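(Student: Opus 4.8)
The plan is to recognize Theorem~\ref{th3.1} as the specialization of the general machinery of Section~\ref{sec2} to the parameters $\Phi_i(z)=z/(1-z)+\bar z/(1-\bar z)$ and $\Psi_j(z)=z$. Concretely, reading off the coefficients of these two functions gives $p_n=1$ for $n\geq 2$, $q_n=1$ for $n\geq 1$, $u_n=0$ for $n\geq 2$, and $v_n=0$ for $n\geq 1$; here $i=j=0$, so the sign factor $(-1)^{j-i}$ is $+1$ throughout. One first checks the admissibility hypotheses $p_n>u_n\geq 0$ and $q_n>v_n\geq 0$, which hold trivially. With these substitutions the quantities appearing in the general theorems collapse: $\sigma_n=p_n-\alpha u_n=1$, $\Gamma_n=q_n-(-1)^{j-i}\alpha v_n=1$ for every relevant $n$.

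The four assertions then follow one by one. First, the coefficient characterization is exactly Theorem~\ref{th2.3} with the substitutions above, since $(p_n-\alpha u_n)/(1-\alpha)=1/(1-\alpha)$ and $(q_n-(-1)^{j-i}\alpha v_n)/(1-\alpha)=1/(1-\alpha)$. Second, the individual coefficient bounds $A_n\leq 1-\alpha$ and $B_n\leq 1-\alpha$ are Corollary~\ref{cor2.4} under the same substitutions. Third, for the growth estimate~\eqref{eq3.1} one invokes Theorem~\ref{th2.6}: the sequences $\{\sigma_n\}$ and $\{\Gamma_n\}$ are the constant sequence $1$, hence non-decreasing, so the hypothesis is met, and $\eta=\min\{\sigma_2,\Gamma_2\}=1$; substituting $\eta=1$, $q_1-(-1)^{j-i}\alpha v_1=1$ into the bounds of Theorem~\ref{th2.6} gives precisely $|f(z)|\leq(1+B_1)|z|+(1-\alpha-B_1)|z|^2$ and the matching lower bound, after noting $(1-\alpha)\bigl(1-B_1/(1-\alpha)\bigr)=1-\alpha-B_1$. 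The covering statement (the disk $|w|<\alpha$ lies in $f(\mathbb{D})$) comes from Corollary~\ref{cor2.7}: with $\eta=1$ and $q_1-(-1)^{j-i}\alpha v_1-\eta=0$ the radius $\tfrac1\eta(\eta-1+\alpha+(q_1-(-1)^{j-i}\alpha v_1-\eta)B_1)$ simplifies to $\alpha$. Finally, the description of the extreme points is Theorem~\ref{th2.8} with the same parameter values, where $h_n(z)=z-\tfrac{1-\alpha}{p_n-\alpha u_n}z^n$ becomes $z-(1-\alpha)z^n$ and $g_n(z)=z+\tfrac{1-\alpha}{q_n-(-1)^{j-i}\alpha v_n}\bar z^n$ becomes $z+(1-\alpha)\bar z^n$.

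Since every component is a direct substitution into an already-proved result, there is essentially no obstacle; the only thing requiring care is bookkeeping—verifying that the sign factor $(-1)^{j-i}$ is indeed $+1$ for $i=j=0$, that the non-decreasing hypothesis of Theorem~\ref{th2.6} is satisfied by constant sequences, and that the arithmetic simplifications (especially $\eta=1$ and the vanishing of the coefficient of $B_1$ in the covering radius) are carried out correctly. I would therefore present the proof as a short paragraph making these substitutions explicit and citing Theorems~\ref{th2.3}, \ref{th2.6}, \ref{th2.8} and Corollaries~\ref{cor2.4}, \ref{cor2.7} in turn, rather than reproving anything from scratch.
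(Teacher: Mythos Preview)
Your proposal is correct and matches the paper's approach exactly: the paper simply states (in the sentence preceding the theorem) that the result follows by applying Theorem~\ref{th2.3} with $p_n=1$, $u_n=0$, $q_n=1$, $v_n=0$ together with the other results of Section~\ref{sec2}, and you have spelled out precisely this specialization, citing Theorems~\ref{th2.3}, \ref{th2.6}, \ref{th2.8} and Corollaries~\ref{cor2.4}, \ref{cor2.7} in turn. If anything, your write-up is more detailed than the paper's one-line justification.
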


\begin{theorem}
Let $a_k, b_k>0$, $c_k>a_k+b_k$ for $k=1,2$. Then a necessary and sufficient condition for the harmonic function $\Phi(z)=2z-\phi_1(z)+\overline{\phi_2(z)}$ to be in the class $\mathcal{TU}_H(\alpha)$ is that
\[F(a_1,b_1,c_1;1)+F(a_2,b_2,c_2;1)\leq 3-\alpha,\]
where $\phi_1$ and $\phi_2$ are given by \eqref{hyper1}.
\end{theorem}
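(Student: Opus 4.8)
The plan is to reduce membership in $\mathcal{TU}_H(\alpha)$ to the coefficient inequality of Theorem \ref{th3.1} and then evaluate the two resulting series in closed form by Gauss's summation theorem.

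First I would write down the Taylor expansions of $\phi_1$ and $\phi_2$ from \eqref{hyper1}, namely $\phi_1(z)=z+\sum_{n=2}^{\infty}\frac{(a_1)_{n-1}(b_1)_{n-1}}{(c_1)_{n-1}(1)_{n-1}}z^n$ and $\phi_2(z)=\sum_{n=1}^{\infty}\frac{(a_2)_n(b_2)_n}{(c_2)_n(1)_n}z^n$, so that $\Phi(z)=2z-\phi_1(z)+\overline{\phi_2(z)}$ has exactly the form \eqref{eq1.2} with $A_n=\frac{(a_1)_{n-1}(b_1)_{n-1}}{(c_1)_{n-1}(1)_{n-1}}$ and $B_n=\frac{(a_2)_n(b_2)_n}{(c_2)_n(1)_n}$; the hypothesis $a_k,b_k,c_k>0$ makes all these coefficients nonnegative, hence $\Phi\in\mathcal{TH}$. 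Applying Theorem \ref{th3.1} (equivalently, specializing Corollary \ref{cor-hyper3} to $p_n=q_n=1$, $u_n=v_n=0$), one gets $\Phi\in\mathcal{TU}_H(\alpha)$ if and only if $\sum_{n=2}^{\infty}A_n+\sum_{n=1}^{\infty}B_n\leq 1-\alpha$.

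Next I would re-index the two sums: $\sum_{n=2}^{\infty}A_n=\sum_{m=1}^{\infty}\frac{(a_1)_m(b_1)_m}{(c_1)_m(1)_m}=F(a_1,b_1,c_1;1)-1$ and likewise $\sum_{n=1}^{\infty}B_n=F(a_2,b_2,c_2;1)-1$. Here the hypothesis $c_k>a_k+b_k$ is what matters: it gives $\RE(c_k-a_k-b_k)>0$, so Gauss's summation theorem guarantees that each series converges, to $\Gamma(c_k)\Gamma(c_k-a_k-b_k)/[\Gamma(c_k-a_k)\Gamma(c_k-b_k)]$, and the rearrangement is legitimate. Substituting, the criterion $\sum A_n+\sum B_n\leq 1-\alpha$ becomes $(F(a_1,b_1,c_1;1)-1)+(F(a_2,b_2,c_2;1)-1)\leq 1-\alpha$, i.e.\ $F(a_1,b_1,c_1;1)+F(a_2,b_2,c_2;1)\leq 3-\alpha$, which is the asserted condition.

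There is no genuinely hard step: the argument is a direct specialization of the coefficient characterization together with the closed-form value of a Gaussian hypergeometric series at $z=1$. The only point needing care is the convergence of the tail sums $\sum A_n$ and $\sum B_n$ — without $\RE(c_k-a_k-b_k)>0$ these may diverge and the stated equivalence would be vacuous — and this is precisely what the condition $c_k>a_k+b_k$ secures.
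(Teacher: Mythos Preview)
Your proof is correct and matches the paper's approach: the paper does not give a separate proof of this theorem but presents it as an immediate specialization of the Section~\ref{sec2} results (Corollary~\ref{cor-hyper3} / Theorem~\ref{th2.3} with $p_n=q_n=1$, $u_n=v_n=0$), and you have carried out exactly that specialization together with the Gauss summation to evaluate the two series. Your explicit remark on why the hypothesis $c_k>a_k+b_k$ is needed for convergence is a useful addition that the paper leaves implicit.
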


The upper bound given in \eqref{eq3.1} for $f \in \mathcal{TU}_{H}(\alpha)$ is sharp and equality occurs for the function $f(z)=z+B_1\bar{z}+(1-\alpha-B_1)\bar{z}^{2}$ for $B_1\leq 1-\alpha$.  In a similar fashion, comparable results to Corollary \ref{cor-hyper4} and Theorems \ref{th-hyper1}, \ref{th-hyper2} for the class $\mathcal{TU}_H(\alpha)$ may also be obtained. For further investigation of results regarding $\mathcal{TU}_{H}(\alpha)$, we need to prove the following simple lemma.

\begin{lemma}\label{lem3.2}
Let $f=h+\bar{g}\in \mathcal{H}$ where $h$ and $g$ are given by \eqref{eq1.1} with $B_1=g'(0)=0$. Suppose that $\lambda \in (0,1]$.
\begin{itemize}
  \item [(i)] If $\sum_{n=2}^{\infty}(|A_n|+|B_n|)\leq \lambda$ then $f \in \mathcal{U}_{H}(1-\lambda)$;
  \item [(ii)] If $\sum_{n=2}^{\infty}n(|A_n|+|B_n|)\leq \lambda$ then $f \in \mathcal{U}_{H}(1-\lambda/2)$ and is starlike of order $2(1-\lambda)/(2+\lambda)$.
\end{itemize}
The results are sharp.
\end{lemma}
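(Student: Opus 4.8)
\textbf{Proof proposal for Lemma \ref{lem3.2}.}

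The plan is to reduce everything to the coefficient characterization in Theorem \ref{th3.1}, together with the classical sufficient coefficient condition for harmonic starlikeness used in Remark \ref{rem2.2} (the Jahangiri criterion $\sum n|A_n|+\sum n|B_n|\le 1$). Since $B_1=0$, write $f(z)=z+\sum_{n=2}^{\infty}A_nz^n+\overline{\sum_{n=2}^{\infty}B_nz^n}$.

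For part (i), assume $\sum_{n\ge2}(|A_n|+|B_n|)\le\lambda$. I want to show $f\in\mathcal{U}_H(1-\lambda)$, i.e. $\RE f(z)/z>1-\lambda$ in $\mathbb{D}$. The cleanest route is \emph{not} to quote Theorem \ref{th3.1} directly (that theorem is stated for functions with negative coefficients in $\mathcal{TH}$, whereas here the coefficients are arbitrary), but to re-run the elementary estimate: for $z\in\mathbb{D}$,
\[
\left|\frac{f(z)}{z}-1\right|=\left|\sum_{n=2}^{\infty}A_nz^{n-1}+\sum_{n=2}^{\infty}B_n\frac{\overline{z}^n}{z}\right|\le\sum_{n=2}^{\infty}(|A_n|+|B_n|)|z|^{n-1}<\sum_{n=2}^{\infty}(|A_n|+|B_n|)\le\lambda,
\]
so $f(z)/z$ lies in the disk of radius $\lambda$ centered at $1$, whence $\RE f(z)/z>1-\lambda$. (Note $0<\lambda\le1$ guarantees $1-\lambda\in[0,1)$, so the class $\mathcal{U}_H(1-\lambda)$ is legitimately defined.) Sharpness is witnessed by $f(z)=z+\lambda z^2$ (or $z+\lambda\overline{z}^2$), for which $f(z)/z\to1-\lambda$ as $z\to-1$.

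For part (ii), assume $\sum_{n\ge2}n(|A_n|+|B_n|)\le\lambda$. Since $n\ge2$ in the sum, $\sum_{n\ge2}(|A_n|+|B_n|)\le\frac12\sum_{n\ge2}n(|A_n|+|B_n|)\le\lambda/2$, so part (i) immediately gives $f\in\mathcal{U}_H(1-\lambda/2)$. For the starlikeness claim I want $f$ to be starlike of order $\beta:=2(1-\lambda)/(2+\lambda)$; the natural tool is the known sufficient condition that $\sum_{n\ge2}(n-\beta)|A_n|+\sum_{n\ge1}(n+\beta)|B_n|\le1-\beta$ implies fully starlikeness of order $\beta$ (this is exactly Theorem \ref{th2.3} specialized to the starlike generating functions, or equivalently the criterion of \cite{jahangiristarlike}). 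With $B_1=0$, and using $n-\beta\le n$, $n+\beta\le n$ fails — so instead bound $(n-\beta)\le n\cdot\frac{2-\beta}{2}$ and $(n+\beta)\le n\cdot\frac{2+\beta}{2}$ for $n\ge2$; actually the sharp bookkeeping is $(n-\beta)/(1-\beta)\le \mu n$ and $(n+\beta)/(1-\beta)\le\mu n$ for all $n\ge2$ where $\mu$ is chosen as the larger of the two ratios at $n=2$, namely $\mu=\frac{2+\beta}{2(1-\beta)}$. Plugging $\beta=2(1-\lambda)/(2+\lambda)$ gives $1-\beta=3\lambda/(2+\lambda)$ and $2+\beta=(6-2\lambda+2)/(2+\lambda)$... one computes $\mu=1/\lambda$, so the hypothesis $\sum n(|A_n|+|B_n|)\le\lambda$ yields $\sum\frac{n-\beta}{1-\beta}|A_n|+\sum\frac{n+\beta}{1-\beta}|B_n|\le 1$, which is the desired starlikeness criterion. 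Sharpness is again exhibited by $f(z)=z+\frac{\lambda}{2}z^2$.

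\textbf{Main obstacle.} The only delicate point is the arithmetic in part (ii): one must verify that the constant $\beta=2(1-\lambda)/(2+\lambda)$ is exactly the largest order for which the weights $(n-\beta)/(1-\beta)$ and $(n+\beta)/(1-\beta)$ are dominated by $n/\lambda$ simultaneously for every $n\ge2$. Because both weight-ratios are increasing-then-stabilizing in $n$ (the worst case is $n=2$ for the $B_n$ term and $n\to\infty$ for the $A_n$ term), one checks the binding constraint comes from $n=2$ in the co-analytic part, i.e. $(2+\beta)/(1-\beta)\le 2/\lambda$, which rearranges precisely to $\beta\le 2(1-\lambda)/(2+\lambda)$; and one separately checks $\lim_{n\to\infty}(n-\beta)/(n(1-\beta))=1/(1-\beta)\le1/\lambda$, i.e. $\lambda\le1-\beta=3\lambda/(2+\lambda)$, equivalently $2+\lambda\le3$, i.e. $\lambda\le1$ — which holds. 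So both constraints are consistent with the stated $\beta$, and the lemma follows. Everything else is the routine triangle-inequality estimate of part (i) and the citation of the harmonic starlikeness coefficient criterion.
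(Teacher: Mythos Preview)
Your argument is essentially the same route as the paper's: part~(i) via the elementary estimate $|f(z)/z-1|<\lambda$, part~(ii) by halving and then invoking the harmonic starlikeness coefficient criterion. The paper's proof is terser---it quotes Theorem~\ref{th3.1} for (i) and cites \cite[Theorem~3.6]{sumit1} for the starlikeness order in (ii)---whereas you correctly note that Theorem~\ref{th3.1} is formally stated only for $\mathcal{TH}$ and instead give the direct triangle-inequality bound, and you unpack the starlikeness arithmetic explicitly. That is all fine and arguably more self-contained.

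There is, however, a genuine gap in your sharpness claim for part~(ii). The function $f(z)=z+\tfrac{\lambda}{2}z^2$ does \emph{not} witness sharpness of the starlikeness order $\beta=2(1-\lambda)/(2+\lambda)$. For this analytic $f$ one has
\[
\RE\frac{zf'(z)}{f(z)}=\RE\frac{1+\lambda z}{1+\tfrac{\lambda}{2}z}>\frac{2(1-\lambda)}{2-\lambda},
\]
which is strictly larger than $2(1-\lambda)/(2+\lambda)$ for every $\lambda\in(0,1]$. The reason is exactly the asymmetry your own bookkeeping exposed: the binding constraint $(n+\beta)/(1-\beta)\le n/\lambda$ at $n=2$ comes from the \emph{co-analytic} weight, so the extremal function must load the mass onto $B_2$, not $A_2$. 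The correct sharpness example (and the one the paper uses) is $f(z)=z+\tfrac{\lambda}{2}\bar z^{\,2}$; for this function the starlikeness quotient on the positive real axis tends to $\dfrac{1-\lambda}{1+\lambda/2}=\dfrac{2(1-\lambda)}{2+\lambda}$ as $z\to 1^-$.
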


\begin{proof}
Part (i) follows by Theorem \ref{th3.1}. For the proof of (ii), note that
\[\sum_{n=2}^{\infty}(|A_n|+|B_n|)\leq \frac{1}{2}\sum_{n=2}^{\infty} n(|A_n|+B_n|)\leq \frac{\lambda}{2}.\]
By part (i) of the lemma, $f \in \mathcal{U}_{H}(1-\lambda/2)$. The order of starlikeness of $f$ follows by \cite[Theorem 3.6]{sumit1}. The functions $z+\lambda \bar{z}^2$ and $z+\lambda \bar{z}^2/2$ show that the results in (i) and (ii) respectively are best possible.
\end{proof}

Using Corollary \ref{cor2.10}, Theorem \ref{th3.1} and Lemma \ref{lem3.2}(i), we obtain the following corollary.

\begin{corollary}
The class $\mathcal{TU}_{H}(\alpha)$ is closed under the product $\hat{*}$. In fact
\[\mathcal{TU}_{H}(\alpha)\hat{*}\mathcal{TU}_{H}(\beta)\subset \mathcal{TU}_{H}(1-(1-\alpha)(1-\beta))\]
for $\alpha, \beta \in [0,1)$.
\end{corollary}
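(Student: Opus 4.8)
The plan is to push everything through the coefficient characterization of Theorem~\ref{th3.1}, exploiting that the product $\hat*$ acts by multiplying coefficients while the second factor has uniformly small coefficients. First, the closure assertion $\mathcal{TU}_H(\alpha)\hat*\mathcal{TU}_H(\alpha)\subset\mathcal{TU}_H(\alpha)$ is immediate from Corollary~\ref{cor2.10} with the choice $p_n=q_n=1$ and $u_n=v_n=0$; alternatively it drops out of the displayed inclusion on taking $\beta=\alpha$, once one notes that $1-(1-\alpha)^2\ge\alpha$ and that $\mathcal{TU}_H(\gamma)\subset\mathcal{TU}_H(\alpha)$ whenever $\alpha\le\gamma<1$ (a direct consequence of the defining inequality $\RE f(z)/z>\gamma$). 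So the substance of the corollary is the quantitative inclusion.

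To prove it, I would fix $f\in\mathcal{TU}_H(\alpha)$ and $F\in\mathcal{TU}_H(\beta)$ written as in \eqref{eq2.5} with nonnegative coefficients $A_n,B_n$ and $A'_n,B'_n$, so that $(f\hat*F)(z)=z-\sum_{n=2}^{\infty}A_nA'_nz^n+\sum_{n=1}^{\infty}B_nB'_n\bar z^n$ again belongs to $\mathcal{TH}$. Set $\gamma:=1-(1-\alpha)(1-\beta)$; since $\alpha,\beta\in[0,1)$ we have $(1-\alpha)(1-\beta)\in(0,1]$, hence $\gamma\in[0,1)$ and $1-\gamma=(1-\alpha)(1-\beta)$. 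Theorem~\ref{th3.1} applied to $F$ gives $A'_n\le 1-\beta$ and $B'_n\le 1-\beta$ for all $n$, whence
\[
\sum_{n=2}^{\infty}A_nA'_n+\sum_{n=1}^{\infty}B_nB'_n\le(1-\beta)\left(\sum_{n=2}^{\infty}A_n+\sum_{n=1}^{\infty}B_n\right)\le(1-\beta)(1-\alpha)=1-\gamma,
\]
the last inequality being the coefficient bound for $f\in\mathcal{TU}_H(\alpha)$ furnished by Theorem~\ref{th3.1}. Dividing by $1-\gamma$ and invoking the sufficiency half of Theorem~\ref{th3.1} with $\alpha$ replaced by $\gamma$ then yields $f\hat*F\in\mathcal{TU}_H(\gamma)$, which is the asserted inclusion.

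I expect no real obstacle here: the argument is essentially a one-line coefficient estimate. The two points that require a moment's care are keeping the co-analytic linear term $B_1B'_1$ inside the sum $\sum_{n=1}^{\infty}B_nB'_n$ (which is why the estimate above retains the full sum $\sum_{n=1}^{\infty}B_n$ rather than its tail, and why one cannot blindly quote Lemma~\ref{lem3.2}(i) for $f\hat*F$ unless $B_1B'_1=0$), and checking that the output order $1-(1-\alpha)(1-\beta)$ genuinely lies in $[0,1)$ so that $\mathcal{TU}_H$ of that order is defined — both of which are trivial. In the special case $B_1=0$ the same computation recovers membership directly via Lemma~\ref{lem3.2}(i).
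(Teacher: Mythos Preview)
Your argument is correct and follows the same coefficient-estimate route the paper indicates (the paper merely cites Corollary~\ref{cor2.10}, Theorem~\ref{th3.1} and Lemma~\ref{lem3.2}(i) without spelling out details). In fact you are slightly more careful than the paper: you observe that Lemma~\ref{lem3.2}(i) carries the hypothesis $B_1=0$ and therefore appeal directly to the full coefficient characterization of Theorem~\ref{th3.1}, which handles the $B_1B'_1$ term without any extra assumption.
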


A well-known classical result involving differential inequalities in univalent function theory is Marx Strohh\"{a}cker theorem \cite[Theorem 2.6(a), p. 57]{monograph} which states that if $f$ is an analytic function in $\mathbb{D}$ with $f(0)=0=f'(0)-1$ then
\[\RE\left(1+\frac{zf''(z)}{f'(z)}\right)>0\quad \Rightarrow\quad \RE \frac{zf'(z)}{f(z)}>\frac{1}{2}\quad \Rightarrow\quad \RE \frac{f(z)}{z}>\frac{1}{2}\quad (z\in \mathbb{D}).\]
The function $f(z)=z/(1-z)$ shows that all these implications are sharp. However, this theorem does not extend to univalent harmonic mappings, that is, if $f \in \mathcal{S}_{H}$ maps $\mathbb{D}$ onto a convex domain then it is not true in general that $\RE f(z)/z>0$ for all $z \in \mathbb{D}$.. To see this, consider the harmonic half-plane mapping
\[L(z)=\frac{z-\frac{1}{2}z^2}{(1-z)^2}+\overline{\frac{-\frac{1}{2}z^2}{(1-z)^2}},\quad z \in \mathbb{D}\]
which maps the unit disk $\mathbb{D}$ univalently onto the half-plane $\RE w>-1/2$. Figure \ref{fig1} shows that the function $L(z)/z$ does not have a positive real part in $\mathbb{D}$.

\begin{figure}[hb]
\centering
\includegraphics[width=3in]{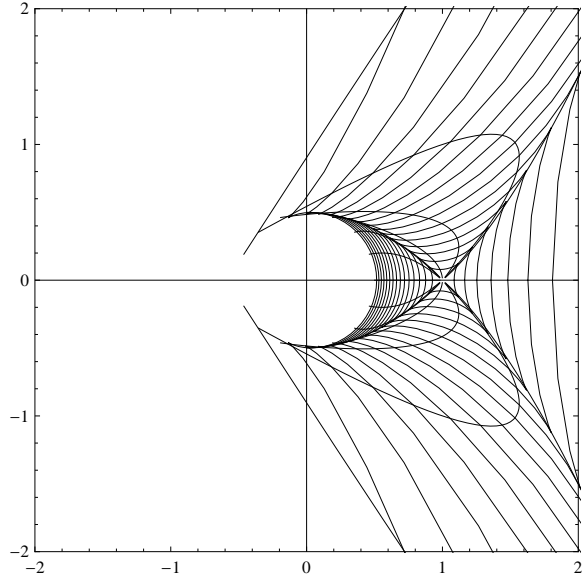}
\caption{Graph of the function $L(z)/z$.}\label{fig1}
\end{figure}

Denote by $\mathcal{TS}^{*0}_{H}(\alpha)$, $\mathcal{TK}^{0}_{H}(\alpha)$ and $\mathcal{TU}^{0}_{H}(\alpha)$, the classes consisting of functions $f$ in $\mathcal{TS}^{*}_{H}(\alpha)$, $\mathcal{TK}_{H}(\alpha)$ and $\mathcal{TU}_{H}(\alpha)$ respectively, for which $f_{\bar{z}}(0)=0$. The next theorem connects the relation between these three classes.

\begin{theorem}\label{th3.3}
For $0\leq \alpha <1$, the following sharp inclusions hold:
\begin{equation}\label{eq3.2}
\mathcal{TK}_{H}^{0}(\alpha)\subset \mathcal{TU}_{H}^{0}\left(\frac{3-\alpha}{2(2-\alpha)}\right);
\end{equation}
and
\begin{equation}\label{eq3.3}
\mathcal{TS}_{H}^{*0}(\alpha)\subset \mathcal{TU}_{H}^{0}\left(\frac{1}{2-\alpha}\right).
\end{equation}
\end{theorem}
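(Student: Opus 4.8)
The plan is to reduce both inclusions to the coefficient characterization of $\mathcal{TU}_H(\alpha)$ given in Theorem \ref{th3.1}, namely that $f=h+\bar g\in\mathcal{TH}$ with $B_1=0$ lies in $\mathcal{TU}_H^0(\beta)$ if and only if $\sum_{n=2}^\infty(A_n+B_n)\le 1-\beta$. So for each inclusion the task is: start from the defining coefficient inequality of the left-hand class (obtained by specializing Theorem \ref{th2.3} to the appropriate $\Phi_i,\Psi_j$), and deduce the sharpest possible bound on $\sum_{n=2}^\infty(A_n+B_n)$.

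For \eqref{eq3.3}: if $f\in\mathcal{TS}_H^{*0}(\alpha)$ then by Theorem \ref{th2.3} applied to the starlike data one has $\sum_{n=2}^\infty\frac{n-\alpha}{1-\alpha}A_n+\sum_{n=1}^\infty\frac{n+\alpha}{1-\alpha}B_n\le 1$, and with $B_1=0$ every coefficient that appears is multiplied by a weight $\ge\frac{2-\alpha}{1-\alpha}$ (the minimum of $\frac{n-\alpha}{1-\alpha}$ and $\frac{n+\alpha}{1-\alpha}$ over $n\ge 2$, resp. $n\ge 1$ but $B_1$ is absent). Hence $\frac{2-\alpha}{1-\alpha}\sum_{n=2}^\infty(A_n+B_n)\le 1$, i.e. $\sum_{n=2}^\infty(A_n+B_n)\le\frac{1-\alpha}{2-\alpha}=1-\frac{1}{2-\alpha}$, and Theorem \ref{th3.1} gives $f\in\mathcal{TU}_H^0\!\left(\frac{1}{2-\alpha}\right)$. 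For \eqref{eq3.2}: if $f\in\mathcal{TK}_H^0(\alpha)$ the convexity data in Theorem \ref{th2.3} yields $\sum_{n=2}^\infty\frac{n(n-\alpha)}{1-\alpha}A_n+\sum_{n=1}^\infty\frac{n(n+\alpha)}{1-\alpha}B_n\le 1$; with $B_1=0$ the smallest weight is again attained at $n=2$, equal to $\frac{2(2-\alpha)}{1-\alpha}$, so $\sum_{n=2}^\infty(A_n+B_n)\le\frac{1-\alpha}{2(2-\alpha)}=1-\frac{3-\alpha}{2(2-\alpha)}$, and Theorem \ref{th3.1} gives membership in $\mathcal{TU}_H^0\!\left(\frac{3-\alpha}{2(2-\alpha)}\right)$. (One should double-check which $\Phi_i,\Psi_j$ from the introduction produce exactly the weights $n-\alpha$, $n+\alpha$ and $n(n-\alpha)$, $n(n+\alpha)$; this is the routine bookkeeping step.)

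For sharpness, I would exhibit in each case the extremal function saturating the estimate: a two-term harmonic polynomial of the form $f(z)=z-A_2 z^2$ or $f(z)=z+B_2\bar z^2$ with $A_2$, resp. $B_2$, chosen so that the left-hand coefficient inequality holds with equality — namely $A_2=\frac{1-\alpha}{2-\alpha}$ (starlike case) and $A_2=\frac{1-\alpha}{2(2-\alpha)}$ (convex case). Evaluating $\RE f(z)/z$ along $z\to 1^-$ on the real axis shows the order $\frac{1}{2-\alpha}$, resp. $\frac{3-\alpha}{2(2-\alpha)}$, cannot be improved, and one checks these functions indeed belong to $\mathcal{TK}_H^0(\alpha)$, resp. $\mathcal{TS}_H^{*0}(\alpha)$, by the same coefficient test.

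The only genuinely delicate point is confirming that the relevant weights $\frac{p_n-\alpha u_n}{1-\alpha}$ and $\frac{q_n-(-1)^{j-i}\alpha v_n}{1-\alpha}$ are minimized at $n=2$ over the ranges that actually occur once $B_1=0$ is imposed — i.e. that dropping the $B_1$ term is exactly what makes the constant come out to $\frac{1}{2-\alpha}$ rather than something governed by the $n=1$ co-analytic weight; everything else is a direct application of Theorems \ref{th2.3} and \ref{th3.1}. I expect no real obstacle beyond this verification and the sign-convention bookkeeping for $\Phi_i,\Psi_j$.
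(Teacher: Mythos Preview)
Your proposal is correct and follows essentially the same route as the paper: reduce everything to the coefficient characterization of $\mathcal{TU}_H^0$, bound $\sum_{n\ge 2}(A_n+B_n)$ by comparing against the minimal weight at $n=2$, and exhibit the two-term analytic extremals for sharpness. The paper packages the convex case through Lemma~\ref{lem3.2}(ii) (first bounding $\sum n(A_n+B_n)$, then halving), whereas you go directly to $\sum(A_n+B_n)$; these are the same estimate unwound, and your sharpness functions and the paper's coincide (note a small slip in your last paragraph: the extremal with $A_2=\tfrac{1-\alpha}{2-\alpha}$ should be checked to lie in $\mathcal{TS}_H^{*0}(\alpha)$, and the one with $A_2=\tfrac{1-\alpha}{2(2-\alpha)}$ in $\mathcal{TK}_H^0(\alpha)$, not the other way around).
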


\begin{proof}
Let $f=h+\bar{g} \in \mathcal{TH}$ where $h$ and $g$ are given by \eqref{eq1.2}. If $f \in \mathcal{TK}_{H}^{0}(\alpha)$ then
\[\sum_{n=2}^{\infty}n(A_n+B_n)\leq \frac{1}{2-\alpha}\sum_{n=2}^{\infty}n(n-\alpha)(A_n+B_n)\leq \frac{1-\alpha}{2-\alpha}\]
using \cite{jahangiriconvex}. By Lemma \ref{lem3.2}(ii) $f \in \mathcal{TU}_{H}^{0}((3-\alpha)/(2(2-\alpha)))$. Regarding the other inclusion, note that if $f \in \mathcal{TS}_{H}^{*0}(\alpha)$ then
\[\sum_{n=2}^{\infty}(A_n+B_n)\leq \frac{1}{2-\alpha}\sum_{n=2}^{\infty}(n-\alpha)(A_n+B_n)\leq \frac{1-\alpha}{2-\alpha}\]
by \cite[Theorem 2, p.\ 474]{jahangiristarlike}. This shows that $f \in \mathcal{TU}_{H}^{0}(1/(2-\alpha))$ by Lemma \ref{lem3.2}(i) as desired. The analytic functions $z-(1-\alpha)z^2/(2(2-\alpha))$ and $z-(1-\alpha)z^2/(2-\alpha)$ show that inclusions in \eqref{eq3.2} and \eqref{eq3.3} respectively are sharp.
\end{proof}

\begin{remark}
The proof of Theorem \ref{th3.3} shows that if $f \in \mathcal{TK}_{H}^{0}(\alpha)$ then $f$ is starlike of order $2/(5-3\alpha)$ by applying Lemma \ref{lem3.2}(ii). This gives the inclusion
\[\mathcal{TK}_{H}^{0}(\alpha)\subset \mathcal{TS}_{H}^{*0}\left(\frac{2}{5-3\alpha}\right).\]
It is not known whether this inclusion is sharp for $\alpha \in (0,1)$. However, if $\alpha=0$ then the inclusion $\mathcal{TK}_{H}^{0}(0)\subset \mathcal{TS}_{H}^{*0}(2/5)$ is sharp with the extremal function as $f(z)=z+\bar{z}^2/4$.
\end{remark}

The functions in the class $\mathcal{TU}_{H}(\alpha)$ need not be univalent in $\mathbb{D}$. The last theorem of this section determines the radius of univalence, starlikeness and convexity of the class $\mathcal{TU}^{0}_{H}(\alpha)$.

\begin{theorem}
The radius of univalence of the class $\mathcal{TU}_{H}^{0}(\alpha)$ is $1/(2(1-\alpha))$. This bound is also the radius of starlikeness of $\mathcal{TU}_{H}^{0}(\alpha)$. The radius of convexity of the class $\mathcal{TU}_{H}^{0}(\alpha)$ is $1/(4(1-\alpha))$.
\end{theorem}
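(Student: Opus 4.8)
The plan is to write $f = h+\bar g$ with $h(z)=z-\sum_{n\ge 2}A_nz^n$ and $g(z)=\sum_{n\ge 1}B_n z^n$, where by Theorem~\ref{th3.1} (applied with $B_1=0$, since $f\in\mathcal{TU}_H^0(\alpha)$) the coefficients satisfy $\sum_{n\ge 2}(A_n+B_n)\le 1-\alpha$. A standard sufficient condition (from the coefficient characterisations in \cite{jahangiristarlike}) guarantees that a harmonic function with $\sum_{n\ge 2}n(A_n+B_n)+|B_1|\le 1$ is univalent, in fact starlike, in $\mathbb{D}$; similarly $\sum_{n\ge 2}n^2(A_n+B_n)+|B_1|\le 1$ forces convexity. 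The idea is to rescale: for $|z|<r$ consider $f_r(z)=f(rz)/r$, whose coefficients are $A_n r^{n-1}$ and $B_n r^{n-1}$, and determine the largest $r$ for which the relevant weighted coefficient sum is $\le 1$ for every admissible $f$. First I would establish the starlikeness (hence univalence) radius by requiring
\[
\sum_{n=2}^{\infty} n(A_n+B_n)r^{n-1}\le 1 .
\]
Since $n r^{n-1}$ is, for fixed $r$, bounded in terms of the quantity $A_n+B_n$ whose total is $\le 1-\alpha$, the worst case is governed by $\sup_{n\ge 2} n r^{n-1}$, and one checks this supremum is attained at $n=2$ precisely when $r\le 1/2$; more carefully, $n r^{n-1}\le 2r$ for all $n\ge 2$ exactly when $r\le 1/2$. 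Hence $\sum n(A_n+B_n)r^{n-1}\le 2r\sum(A_n+B_n)\le 2r(1-\alpha)$, which is $\le 1$ iff $r\le 1/(2(1-\alpha))$. This yields the radius $1/(2(1-\alpha))$ for starlikeness, and since starlikeness implies univalence the same value is a lower bound for the univalence radius.

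For sharpness I would exhibit the extremal function, namely $f(z)=z+(1-\alpha)\bar z^{2}$ (an extreme point of $\mathcal{TU}_H(\alpha)$ with $B_1=0$, from Theorem~\ref{th3.1}), and show that it fails to be univalent — indeed fails to be locally univalent, i.e. its Jacobian $|h'|^2-|g'|^2 = 1 - 4(1-\alpha)^2|z|^2$ vanishes — exactly on $|z|=1/(2(1-\alpha))$. This simultaneously shows the univalence and starlikeness radii cannot exceed $1/(2(1-\alpha))$, matching the lower bound, so both radii equal $1/(2(1-\alpha))$.

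For the convexity radius I would run the same rescaling argument with the convexity coefficient condition $\sum_{n\ge 2}n^2(A_n+B_n)r^{n-1}\le 1$. The controlling quantity is now $\sup_{n\ge 2} n^2 r^{n-1}$; one verifies $n^2 r^{n-1}\le 4r$ for all $n\ge 2$ precisely when $r\le 1/4$, so $\sum n^2(A_n+B_n)r^{n-1}\le 4r(1-\alpha)\le 1$ iff $r\le 1/(4(1-\alpha))$, giving the convexity radius $1/(4(1-\alpha))$; the same function $f(z)=z+(1-\alpha)\bar z^2$ serves to show this is best possible, since for it the boundary curve $f(re^{i\theta})$ first loses convexity at $r=1/(4(1-\alpha))$.

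The main obstacle is the sharpness half rather than the radius estimates: the coefficient-sum bounds only give sufficient conditions, so one must check that the candidate extremal $z+(1-\alpha)\bar z^2$ genuinely ceases to be univalent (resp. starlike, resp. convex) at exactly the claimed radius and not later. For univalence/starlikeness this is handled cleanly by the vanishing of the Jacobian (a necessary condition for sense-preserving univalence), which pins the radius down exactly; for convexity one must instead analyse when the image curve develops a reversal of the tangent direction, a slightly more delicate direct computation with $\frac{\partial}{\partial\theta}\arg\{\frac{\partial}{\partial\theta}f(re^{i\theta})\}$, but still elementary for this two-term function.
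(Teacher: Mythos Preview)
Your approach---rescale to $r^{-1}f(rz)$, invoke the coefficient sufficient conditions for starlikeness and convexity, and test sharpness on $z+(1-\alpha)\bar z^{2}$---is precisely the paper's. The gap is in how you combine your two estimates. You first bound $nr^{n-1}\le 2r$ ``exactly when $r\le 1/2$'' (the correct threshold is actually $r\le 2/3$, the binding case being $n=3$, but leave that aside) and then require $2r(1-\alpha)\le 1$, i.e.\ $r\le 1/(2(1-\alpha))$. For every $\alpha>0$ the second threshold strictly exceeds the first, so the conjunction only delivers the sum estimate on the \emph{smaller} interval; you have not shown $\sum_{n\ge 2} n(A_n+B_n)r^{n-1}\le 1$ for $r$ lying between your first threshold and $1/(2(1-\alpha))$. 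The identical remark applies to the convexity argument with $n^2r^{n-1}\le 4r$.

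This is not just a matter of rearranging the inequalities. The paper's one-line version, ``$nr^{n-1}\le 1/(1-\alpha)$, which is true if $r\le 1/(2(1-\alpha))$,'' has the same defect: at $r=1/(2(1-\alpha))$ and $n=3$ it reduces to $3\le 4(1-\alpha)$, which fails once $\alpha>1/4$. Worse, the competitor $z+(1-\alpha)\bar z^{3}\in\mathcal{TU}_H^0(\alpha)$ has Jacobian $1-9(1-\alpha)^2|z|^4$, vanishing at $|z|=(3(1-\alpha))^{-1/2}$, and this radius is strictly smaller than $1/(2(1-\alpha))$ precisely when $\alpha>1/4$. So your proof and the paper's break down for the same reason, and the stated radii can be correct only on the sub-range of $\alpha$ for which the $n=2$ extremal really is the worst case.
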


\begin{proof}
Let $f=h+\bar{g}\in \mathcal{TU}_{H}^{0}(\alpha)$ where $h$ and $g$ are given by \eqref{eq1.2} and let $r \in (0,1)$ be fixed. Then $r^{-1}f(rz) \in \mathcal{TU}_{H}^{0}(\alpha)$ and we have
\[\sum_{n=2}^{\infty}n(A_n+B_n)r^{n-1}\leq \sum_{n=2}^{\infty}\left(\frac{A_n}{1-\alpha}+\frac{B_n}{1-\alpha}\right)\leq 1\]
provided $nr^{n-1}\leq1/(1-\alpha)$ which is true if $r \leq 1/(2(1-\alpha))$. In view of \cite[Theorem 1, p. 284]{silver}, $f$ is univalent and starlike in $|z|<1/(2(1-\alpha))$. Regarding the radius of convexity, note that
\[\sum_{n=2}^{\infty}n^2(A_n+B_n)r^{n-1}\leq \sum_{n=2}^{\infty}\left(\frac{A_n}{1-\alpha}+\frac{B_n}{1-\alpha}\right)\leq 1\]
provided $n^{2}r^{n-1}\leq1/(1-\alpha)$ which is true if $r \leq 1/(4(1-\alpha))$. The function $f(z)=z+(1-\alpha)\bar{z}^{2}$ shows that these bounds are sharp. This completes the proof of the theorem.
\end{proof}

\section*{Acknowledgements}
The research work of the first author is supported by research fellowship from Council of Scientific and Industrial Research (CSIR), New Delhi. The authors are thankful to the referee for her useful comments.

\end{document}